\colorlet{texcscolor}{blue!50!black}
\colorlet{texemcolor}{red!70!black}
\colorlet{texpreamble}{red!70!black}
\colorlet{codebackground}{black!25!white!25}
\newcommand{\R}{\mathds{R}}
\def\X{\rm{X}}
\def\Vad{{\rm V}^{\rm ad}} 
\def\VadStrict{\mathring{\rm{V}}^{\rm{ad}}}
\DeclareMathOperator*{\esssup}{ess\,sup}
\newcommand{\bu}{\bar{u}}
\newcommand{\bx}{\bar{x}}
\newcommand{\bp}{\bar{p}}
\newcommand{\ue}{u_\epsilon}
\newcommand{\ze}{z_\epsilon}
\newcommand{\zd}{z_\delta}
\newcommand{\vd}{v_\delta}
\newcommand{\xdelta}{x_\delta}
\newcommand{\bue}{\bar{u}_\epsilon}
\newcommand{\bxe}{\bar{x}_\epsilon}
\newcommand{\bxen}{\bar{x}_{\epsilon_n}}
\newcommand{\bxenk}{\bar{x}_{\epsilon_{n_k}}}
\newcommand{\bpe}{\bar{p}_\epsilon}
\newcommand{\bpen}{\bar{p}_{\epsilon_n}}
\newcommand{\bpenk}{\bar{p}_{\epsilon_{n_k}}}
\newcommand{\buen}{\bar{u}_{\epsilon_n}}
\newcommand{\buenk}{\bar{u}_{\epsilon_{n_k}}}
\newcommand{\pbe}{\bar{p}_\epsilon}
\newcommand\xs[1]{x^{1}}
\newcommand\supp{S}
\newcommand\norm[1]{\left\Vert#1\right\Vert}
\newcommand\const{\textrm{\textnormal{const}}}
\newcommand\st{\textrm{ s.t. }}
\def\xdt{{\rm{d}}t}
\def\xd{{\rm{d}}}
\def\xdn#1{{\rm{d}}#1}
\def\xBV{{\rm BV}}
\def\U{\textnormal{U}}
\def\xCzero{{\rm{C}}^{0}}
\def\xCone{{\rm{C}}^{1}} 
\def\xCtwo{{\rm{C}}^{2}} 
\def\xCn#1{{\rm{C}}^#1}
\def\xWn#1{{\rm{W}}^#1}
\def\xLone{{\rm{L}}^{1}}
\def\xLtwo{{\rm{L}}^{2}} 
\def\xLinfty{{\rm{L}}^{\infty}} 
\def\xLn#1{{\rm{L}}^#1}
\def\xBV{{\rm BV}}
\def\U{\textnormal{U}}
\newtheorem{ssmptn}{Assumption}
\newtheorem{rmrk}{Remark}
\newtheorem{thrm}{Theorem}
\newtheorem{prpstn}{Proposition}
\newtheorem{dfntn}{Definition}
\newtheorem{lmm}{Lemma}
\newtheorem{crllr}{Corollary}
\title{Interior point methods in optimal control problems of affine systems: Convergence results and solving algorithms}
\date{}
\author{Paul Malisani\thanks{IFP Energies nouvelles, Applied Mathematics Department, 1 et 4 avenue de Bois-Pr\'eau, 92852 Rueil-Malmaison, France (paul.malisani@ifpen.fr).}}
\begin{document}
\maketitle

\begin{abstract}
This paper presents an interior point method for pure-state and mixed-constrained optimal control problems for dynamics, mixed constraints, and cost function all affine in the control variable. This method relies on resolving a sequence of two-point boundary value problems of differential and algebraic equations. This paper establishes a convergence result for primal and dual variables of the optimal control problem. A primal and a primal-dual solving algorithm are presented, and a challenging numerical example is treated for illustration. Article accepted for publication at SIAM SICON.
\end{abstract}

\section{Introduction}
This paper deals with Optimal Control Problems (OCPs) with pure state and mixed constraints. These arise naturally in numerous engineering problems such as aerospace \cite{Betts,BrysonHo}, control of hybrid electric vehicles \cite{hev}, or innate immune response \cite{hiv}, among other examples. Unfortunately, these problems are difficult to solve \cite{Hartl,BonnansHermant,Maurer}. This difficulty mainly stems from the pure state constraints. Indeed, as shown in \cite{Hartl,Maurer}, the first-order optimality conditions of these problems imply that the adjoint state of Pontryagin can be discontinuous when state constraints switch from active to inactive and vice-versa. To handle these problems, three main approaches are found in the literature. The first is a discretization-based approach that treats an OCP as a finite-dimensional optimization problem \cite{Betts}. This approach is, in practice, the most widely used and the easiest to implement. However, these methods can be computationally slow and might lack precision. The second approach consists of computing the optimal trajectory without constraints and, step-by-step, computing the trajectory's structure, that is to say, the sequence of constrained and unconstrained arcs along the trajectory. Therefore, these methods assume that the optimal trajectory contains finitely many constrained arcs. Unfortunately, this is not always the case, and trajectory structures can be much more complicated, even for simple cases \cite{robbins}. These methods are known as continuation or homotopy methods \cite{BonnansHermant,hermantshooting,trelat_continuation}. Finally, the third approach consists in adapting Interior Point Methods (IPMs), widely studied and successfully implemented in software for numerical optimization \cite{Nocedal,WrightPrimalDual,ipopt}, to state and input-constrained OCPs \cite{lasdon67,Bonnans_log,graichen,maliOCAM,Weiser}. This approach entails minimizing an augmented cost function defined as the sum of the original cost function and so-called penalty functions, which have a diverging asymptotic behavior in the vicinity of the constraints. IPMs then define a sequence of OCPs indexed by a sequence of decreasing positive parameters converging to zero. These parameters serve as weights for the penalty functions in the augmented cost. Each OCP of this sequence is then solved as a constraints-free problem whose solution strictly satisfies the constraints and asymptotically converges to the solution of the original problem. From a practical viewpoint, these methods are appealing since off-the-shelves OCP solvers such as \cite{bvpsolve} can be used. However, adapting IPMs to OCPs is not straightforward and has yet to be wholly performed. Indeed, to be complete, this adaptation requires proving two things. Firstly, the optimal trajectories of the penalized problem are interior, i.e., strictly satisfy the constraints. Secondly, it requires establishing the convergence of the method to a point satisfying the first-order optimality conditions \cite{Hartl,Maurer,BonnansHermant}. In other words, to prove the convergence of primal variables (namely state and control variables) and the dual variables (namely the adjoint state of Pontryagin and the inequality constraints multipliers). In \cite{lasdon67,graichen}, the interiority of solutions is left as an assumption, and convergence is proved only for state and control variables. In addition, the authors assume the uniqueness of the optimal solution and strong convexity of the optimal control problem allowing them to prove convergence of the global minimum only. In \cite{Bonnans_log}, the authors prove the interiority of solutions and the convergence of control, state, and adjoint state variables in the case of linear control constraints for dynamical systems affine with respect to the control. To establish the proof, the authors assume the uniqueness of the optimal solution and the strong convexity of the optimal control problem. In \cite{Weiser}, a primal-dual IPM for OCPs is presented, and convergence of control, state, adjoint state, and constraint multipliers is proved only for control-constrained problems using a strong Legendre-Clebsch type condition, which is a sufficient condition for strong convexity \cite[Theorem 5.4]{FredericBonnans.2014}. In \cite{maliOCAM}, the authors exhibit sufficient conditions on the penalty functions guaranteeing the interiority of solutions for state and control-constrained problems. However, the convergence of the method is only proved for control and state variables, again assuming the uniqueness of the optimal solution and strong convexity of the problem.\\
This paper's contribution is proving the convergence of primal variables and constraint multipliers for pure-state and mixed constraints. In addition, the proof of convergence does not rely on the optimal solution's uniqueness or the problem's strong convexity assumptions. As in \cite{Bonnans_log,graichen}, the result is established for non-linear systems and mixed constraints affine with respect to the control. To do so, we prove that using logarithmic penalties guarantees the interiority of any locally optimal solution. In addition, we prove that the derivative of the penalty functions associated with any locally optimal solutions satisfies a uniform boundedness property. Using some standard compactness argument, we can prove weak convergence of the control and strong convergence of the state. In turn, this allows us to prove weak or strong convergence (depending on the case) of the derivatives of the penalty to the constraints multipliers. Then, the strong convergence of the adjoint state stems from the convergence of the state, control, and constraint multipliers. Finally, strong convergence of state and adjoint state allows proving strong convergence of the control variable. Finally, this paper provides a primal and a primal-dual solving algorithm based on Two Point Boundary Value Problems solver.\\
The paper is organized as follows. \Cref{sec:problem_statement} contains the problem statement, the main assumptions, and the paper's main results. In \cref{sec:preliminary_results}, some preliminary technical results are recalled. In \cref{sec:interiority}, we prove both the uniform boundedness properties of the derivatives of the penalty functions and the interiority of penalized trajectories when using logarithmic penalties. In \cref{sec:convergence}, we prove that the solutions of the penalized optimal control problem converge to a solution of the classical first-order optimality conditions of constrained optimal control problems. \Cref{sec:solving_alg} presents a primal and a primal-dual solving algorithm. Finally, in \cref{sec:algoExample}, the Robbins problem \cite{robbins} is treated using the primal and the primal-dual interior point algorithm. In addition, source codes for the Robbins and the Goddard problems \cite{seywald1993goddard} are available at \url{https://ifpen-gitlab.appcollaboratif.fr/detocs/ipm_ocp}.

\paragraph{\bf{Notations}:}
We denote $\R_-$ (resp. $\R_+$) the set of non-positive (resp. non-negative) real numbers. We denote $\mathds{N}_\ast$ (resp. $\R_\ast$) the set of non-zero natural integers (resp. real numbers). Given $p\in[1,+\infty]$, we denote $\xLn{p}(A;B)$ (or $\xLn{p}$) the Lebesgue spaces of functions from $A$ to $B$ and we denote $\norm{.}_{\xLn{p}}$ the corresponding $p$-norm. In addition, we also denote $\textrm{meas}(.)$ the Lebesgue measure on $\R$. Given $p\in[1,+\infty]$, we denote $\xWn{{1,p}}(A;B)$ the Sobolev space of measurable functions from $A$ to $B$ with weak derivative in $\xLn{p}(A;B)$. Given $n\in[0,+\infty]$, we denote $\xCn{n}(A;B)$ (or $\xCn{n}$) the set of $n$-times continuously differentiable functions from $A$ to $B$. We denote $\xBV(A)$, the set of functions with bounded variations from $A$ to $\R$. We also denote $\mathcal{M}(A)$ the set of Radon measures on $A\subset\R$. The topological dual of a topological vector space $E$ is denoted $E^*$. Given a topological vector space $E$, we denote $\sigma(E,E^*)$ the weak topology on $E$ and $\sigma(E^*,E)$ the weak $\ast$ topology on $E^*$. Let $x_n,x\in E$, we denote $x_n\rightharpoonup x$ the weak convergence in $\sigma(E,E^*)$ and let $y_n,y\in E^*$, we denote $y_n\stackrel{\ast}{\rightharpoonup}y$ the weak $\ast$ convergence in $\sigma(E^*,E)$. For $x^*\in E^*$ and $x\in E$, we denote $\langle x^*,x\rangle$, the duality product. Given $f\in \xCn{{k\geq1}}(\R^n;\R)$ we denote $f'(.)$ the gradient of the function. Given $f\in \xCn{{k\geq1}}(\R^n \times \R^m;\R^p)$, we denote $f_x'(x,y):=\frac{\partial f}{\partial x}(x,y)\in \R^{p\times n}$ (resp.$f_y'(x,y):=\frac{\partial f}{\partial y}(x,y)\in \R^{p\times m}$) and we denote $f'_{i,x}:=\frac{\partial f_i}{\partial x}(x,y)$ (resp. $f'_{i,y}:=\frac{\partial f_i}{\partial y}(x,y)$). Given $f\in \xCn{{k\geq1}}(\R^n \times \R^m;\R)$, we denote $f'_{x,i}(x,y) := \left(f'_x(x,y)\right)_i$ (resp. $f'_{y,i}(x,y) := \left(f'_y(x,y)\right)_i$). We also denote $f''_{xy}(x,y):=\frac{\partial^2f}{\partial y \partial x}(x,y)$. Let $G:X\mapsto Y$ with $X,Y$ Banach spaces, we denote $DG(x)$ the derivative of the mapping $G$ at point $x\in X$. The finite dimensional euclidean norm is denoted $\norm{.}$ and the scalar (resp. matrix) product between $x,y\in \R^n$ (resp. $x\in\R^{m\times n},y\in\R^n$) is denoted $x.y$. Given a set $E$, we denote $\vert E \vert$ its cardinal. We also denote $B_N(x,r)$ the closed ball of radius $r$ centered in $x$ for the topology induced by norm $N$. We denote $x[u,x^0]$ (or $x[u]$ if $x^0$ is fixed) the solution of the differential equations $\dot{x}=f(x,u)$ with initial condition $x^0$. Finally, we denote $\const(.)$ a positive finite constant depending on the parameters in argument.

\section{Problem statement and main result}
\label{sec:problem_statement}
\subsection{Optimal control problem}
The problem we are interested in consists of finding a solution  $(x,u)$ of the following Constrained Optimal Control Problem (COCP)
\begin{subequations}
\label{eq:all_orig_problem}
\begin{align}
\min_{(u,x)\in\U\times \X} J(x,u) &:=\varphi(x(T)) + \int_0^T \ell_1(x(t)) + \ell_2(x(t)).u(t)\xdt \\
&:= \varphi(x(T)) + \int_0^T \ell(x(t), u(t))\xdt 
\label{eq:cost_orig}\\
    \dot{x}(t) & = f_1(x(t)) + f_2(x(t)).u(t) := f(x(t),u(t))  \label{eq:def_dynamics}\\
     0 & =h(x(0),x(T)) \label{eq:def_initial_condition}\\
    0 &\geq g(x(t));\; \forall t\label{eq:def_state_const}\\
    0 &\geq a(x(t)).u(t) + b(x(t)) := c(x(t),u(t))\;a.e. \label{eq:def_mixed_const}\\
    \U& := \xLinfty([0,T];\R^m)\label{eq:def_control_space}\\
    \X& := \xWn{{1,\infty}}([0,T];\R^n)\label{eq:def_state_space}
\end{align}
\end{subequations}
where the time horizon $T>0$ is fixed.
\begin{dfntn}
    \label{def:Vad}
    We denote $\Vad \subset \U \times \R^n$ the set of admissible controls and initial conditions as follows
    \begin{equation}
    \label{eq:def_Vad}
        \Vad := \left\{(u,x^0)\in \U\times \R^n \textrm{ s.t. } \cref{eq:def_dynamics,eq:def_initial_condition,eq:def_mixed_const,eq:def_state_const} \textrm{ holds} \right\}
    \end{equation}
    The set $\Vad$ is endowed with the following norm
    \begin{equation}
        \label{eq:def_Vad_norm}
        \norm{(u,x^0)}_{\Vad} := \norm{u}_{\xLone} + \norm{x^0}
    \end{equation}
    And we denote $\VadStrict(n)$ the following set
    \begin{equation}
        \label{eq:def_VadStrict}
        \VadStrict(n) := \left\{(u, x^0) \in \Vad \textrm{ s.t. } \begin{cases}
            g(x[u, x^0](t))&< 0,\; \forall t\\
            \esssup_t c(x[u, x^0], u)&\leq -\frac{1}{n}
        \end{cases}\right\}
    \end{equation}
\end{dfntn}

\subsection{Main assumptions and technical definitions}
\begin{ssmptn}
\label{ass:C1_data}
The functions $\ell:\R^n\times\R^m\mapsto\R$, $f:\R^n\times\R^m\mapsto\R^n$, $g:\R^n\mapsto\R^{n_g}$, $c:\R^n \times \R^m \mapsto \R^{n_c}$ are at least twice continuously differentiable.
\end{ssmptn}

\begin{ssmptn}
    \label{ass:interior_accessibility}
    Any locally optimal solution $(x[\bu, \bx^0],\bu)$ such that $(\bu, \bx^0)\in\Vad$ satisfies the following interiority accessibility assumption
    \begin{equation}
        \label{eq:def_interior_accessibility}
        (\bu, \bx^0) \in \textrm{\rm{cl}}_{\norm{.}_{\Vad}}\left\{\liminf_{n} \VadStrict(n) \right\}:=\Vad_\infty
    \end{equation}
    where $\textrm{\rm{cl}}_{\norm{.}_{\Vad}}$ stands for the closure in the $\norm{.}_{\Vad}$-topology. 
\end{ssmptn}

\begin{ssmptn}
\label{ass:bounded_if_state_set}
The set of admissible initial-final states $h^{-1}(\lbrace 0\rbrace)\subset \R^n \times \R^n$ from \cref{eq:def_initial_condition} is closed and bounded. 
\end{ssmptn}

\begin{ssmptn}
    \label{ass:bounded_control_set}
    There exists $R_v \in (0,+\infty)$ such that 
    \begin{equation}
    \norm{(u,x^0)}_{\Vad}\leq R_v,\;\forall (u,x^0)\in \Vad
    \end{equation}
    and for all $R_v\in(0,+\infty)$, there exists $R_x\in (0,+\infty)$ such that
    \begin{equation}
    \norm{x[u,x^0]}_{\xLinfty} \leq R_x,\; \forall \norm{(u,x^0)}_{\Vad} \leq R_v
    \end{equation}
\end{ssmptn}

\begin{dfntn}[Set of near state-saturated times and near-saturated indices]
For all $(u,x^0)\in \Vad$ from \cref{def:Vad} and $\forall \delta \geq0$ we define the set of near state-saturated times (resp. mixed-saturated times), denoted $\supp^g_{u,x^0}$ (resp. $\supp^c_{u,x^0}$) , as follows
\begin{align}
    \supp^g_{u,x^0}(\delta) &:= \left\lbrace t\in [0,T] \st \max_i g_i(x[u, x^0](t))\geq -\delta\right\rbrace\label{eq:def_supp_g}\\
    \supp^c_{u,x^0}(n) &:= \left\lbrace t\in [0,T] \st \max_i c_i(x[u, x^0](t), u(t))\geq -\frac{1}{n}\right\rbrace\label{eq:def_supp_c}
\end{align}
In addition, we define the set of near state-saturated indices (resp. mixed-saturated indices), denoted $I^g_{u,x^0}$ (resp. $I^c_{u,x^0}$) , as follows
\begin{align}
    I^g_{u,x^0}(t,\delta) &:= \left\lbrace i\in\{1,\dots,n_g\}\st g_i(x[u, x^0](t))\geq -\delta\right\rbrace\label{eq:def_indices_g}\\
    I^c_{u,x^0}(t,n) &:= \left\lbrace i\in\{1,\dots,n_c\} \st c_i(x[u, x^0](t), u(t))\geq -\frac{1}{n} \right\rbrace\label{eq:def_indices_c}
\end{align}
\end{dfntn}

\begin{ssmptn}
    \label{ass:qualification_condition_mixed_constraint}
    For all $(u,x^0) \in \Vad$, the mixed constraints \cref{eq:def_mixed_const} satisfy the following qualification condition. There exists $\gamma>0$ and $n \in \mathds{N}_\ast$ such that
    \begin{equation}
        \gamma \norm{\xi} \leq \norm{c'_{I^c_{u,x^0}(t,n),u}(x[u,x^0](t), u(t))^\top.\xi},\;\forall \xi \in \R^{\vert I^c_{u,x^0}(t,n)\vert},\;\textrm{a.a. } t\in [0,T]
    \end{equation}
\end{ssmptn}

\begin{ssmptn}
    \label{ass:robinson_qualification}
    The set of singular multipliers for problem \cref{eq:all_orig_problem} is empty.
\end{ssmptn}

\begin{rmrk}
Sufficient conditions on pure-state and mixed constraints such that \cref{ass:robinson_qualification} holds are given in \cite{BonnansHermant}. In addition, these assumptions guarantee the existence of optimal solutions of problem \cref{eq:all_orig_problem} (see \cite{haberkorn}). 
\end{rmrk}

\begin{dfntn}[State-constraint measure]
\label{def:mui}
For all $(u, x^0) \in B_{\xLinfty}(0,R_u) \times B_{\norm{.}}(0,R_x)$ and for all $E \subset \R $, we denote $m[u, x^0, g_i]$ the push-forward $g_i$-measure of $E$ defined as follows
\begin{equation}
    \label{eq:def_mui_alpha1_alpha2}
    m[u, x^0,g_i](E) := \textrm{meas}\left(\left(g_i\circ x[u, x^0]\right)^{-1}(E)\right)
\end{equation}
\end{dfntn}

\subsection{First-order necessary conditions of stationarity} In this section, the first order necessary conditions of optimality for Problem \cref{eq:all_orig_problem}. To do so, let us introduce the infamous pre-Hamiltonian.
\begin{dfntn}[pre-Hamiltonian]
\label{def:pre_hamiltonian}
The pre-Hamiltonian $H:\R^n \times \R^m \times \R^n\mapsto \R$ of the optimal control problem is defined by
\begin{equation}
    \label{eq:def_pre_hamiltonian}
    H(x,u,p) := \ell(x,u) + p.f(x,u)
\end{equation}
\end{dfntn}

\begin{dfntn}[Stationary point]
\label{def:pontryagin_extremal}
The trajectory $(\bx, \bu)$ with associated multipliers $(\bp, \bar{\mu}, \bar{\nu},\bar{\lambda})\in \xBV([0,T];\R)^n \times \mathcal{M}([0,T])^{n_g} \times  \xLinfty([0,T];\R^{n_c}) \times \R^{n_h}$, is a stationary point for Problem \eqref{eq:all_orig_problem} if it satisfies
\begin{subequations}
\label{eq:all_first_order}
\begin{align}
    \dot{\bar{x}}(t) =& f(\bar{x}(t),\bu(t))\label{eq:first_order_1}\\
    -\xdn{\bar{p}}(t) =& \left[H_x'(\bx(t), \bu(t),\bp(t)) + \sum_{i=1}^{n_c}c_{i,x}'(\bx(t), \bu(t))\bar{\nu}_i(t)\right] \xdt
+\sum_{i=1}^{n_g}g'_i(\bx(t))\xdn{\bar{\mu}}_i(t)\label{eq:first_order_2}\\
    0=& H_u'(\bx(t),\bu(t), \bp(t))+\sum_{i=1}^{n_c} c_{i,u}'(\bx(t), \bu(t)) \bar{\nu}_i(t)\label{eq:first_order_3}\\
    0=&h(\bx(0), \bx(T)) \label{eq:first_order_4}\\
    0  =&\bp(0) + h_{x(0)}'(\bx(0),\bx(T))^\top.\bar{\lambda}\label{eq:first_order_5}\\
     0=&\bp(T) - \varphi'(\bx(T)) -  h_{x(T)}'(\bx(0),\bx(T))^\top.\bar{\lambda}\label{eq:first_order_6}\\
   0=& \int_0^T g_i(\bx(t))\xdn{\bar{\mu}}_i(t),\;\;i=1,\dots,n_g\label{eq:first_order_7}\\
    0=&\int_0^T c_i(\bx(t), \bu(t))\bar{\nu}_i(t)\xdt,\;\;i=1,\dots,n_c\label{eq:first_order_8}\\
    0=&\bar{\lambda}^\top.h(\bx(0),\bx(T)) \label{eq:first_order_9}\\
    0 \leq & \xdn{\bar{\mu}}_i(t),\;\;i=1,\dots,n_g\label{eq:first_order_11}\\
    0 \leq &\bar{\nu}_i(t),\;\;i=1,\dots,n_c\label{eq:first_order_12}\\
    0 = &\bar{\mu}_i(T), \;\;,\;\;i=1,\dots,n_g\label{eq:first_order_13}
\end{align}
\end{subequations}
\end{dfntn}
It is a well-established results \cite{Maurer,BonnansHermant} that any local solution of problem \cref{eq:all_orig_problem} is a stationary point as defined in \cref{def:pontryagin_extremal}. Unfortunately, solving Problem \cref{eq:all_first_order} is a difficult task. Indeed, the dual variable $d\mu$ associated with the state constraints appearing in \cref{eq:first_order_2,eq:first_order_7,eq:first_order_11,eq:first_order_13} are Radon measures, therefore, in full generality, they can be decomposed in an absolutely continuous measure with respect to the Lebesgue measure, a discrete and finally a singular part. Computing these measures' discrete and singular parts can be dramatically complex.
\subsection{Penalized Optimal Control Problem (POCP)}
To solve problem \cref{eq:all_orig_problem}, we use an interior point method based on log-barrier functions defined as follows 
\begin{dfntn}[log-barrier function]
\label{def:log_barrier_function}
The log-barrier function $\psi:\R\mapsto\R$ is defined as follows
\begin{equation}
    \psi(x):=\begin{cases}
        -\log(-x)& \forall x<0\\
        +\infty&\textrm{otherwise}
    \end{cases}
\end{equation}
\end{dfntn}
The corresponding penalized optimal control problem is defined as follows
\begin{subequations}
\label{eq:def_log_barrier_ocp}
\begin{align}
    \min_{(x,u)\in \X \times \U} J_\epsilon(x,u) &:=J(x,u) +\epsilon\int_0^T\left[\sum_{i=1}^{n_g}\psi\circ g_i(x(t)) +\sum_{i=1}^{n_c} \psi \circ c_i(x(t), u(t))\right] \xdt\\
    \dot{x}(t) & = f(x(t),u(t))\\
    0&=h(x(0),x(T))
\end{align}
\end{subequations}

The pre-Hamiltonian associated with this penalized problem is defined here after
\begin{dfntn}[Penalized pre-Hamiltonian]
\label{def:penalized_pre_hamiltonian}
The penalized pre-Hamiltonian $H^\psi:\R^n \times \R^m \times \R^n \times \R \mapsto \R$ of POCP \cref{eq:def_log_barrier_ocp} is defined by
\begin{equation}
    \label{eq:def_penalized_pre_hamiltonian}
    H^\psi(x,u,p, \epsilon) := H(x,u,p)+ \epsilon\left(\sum_{i=1}^{n_g}\psi(g_i(x))+\sum_{i=1}^{n_c} \psi(c_i(x,u))\right)
\end{equation}
\end{dfntn}

\begin{dfntn}[Penalized stationary point]
\label{def:penalized_pontryagin_extremal}
The trajectory $(\bxe, \bue)$ with associated multipliers $(\pbe, \bar{\lambda}_\epsilon)\in \xWn{{1,1}}([0,T];\R^n) \times \R^{n_h}$, is a penalized stationary point for Problem \eqref{eq:def_log_barrier_ocp} if it satisfies
\begin{subequations}
\label{eq:penalized_pontryagin_extremal}
\begin{align}
    \dot{\bar{x}}_{\epsilon}(t) =& f(\bxe(t),\bue(t))\label{eq:penalized_pontryagin_extremal_1}\\
    \dot{\bp}_\epsilon(t) =& - {H_x^{\psi}}'(\bxe(t),\bue(t),\pbe(t), \epsilon)\label{eq:penalized_pontryagin_extremal_2}\\
    0=&{H_u^{\psi}}'(\bxe(t),\bue(t),\pbe(t), \epsilon)\label{eq:penalized_pontryagin_extremal_3}\\
    0=&h(\bxe(0), \bxe(T))\label{eq:penalized_pontryagin_extremal_4}\\
    0 =& \bp_\epsilon(0) + h'_{x(0)}(\bxe(0), \bxe(T))^\top.\bar{\lambda}_{\epsilon}\label{eq:penalized_pontryagin_extremal_5}\\
    0 =& \bp_\epsilon(T) -\varphi'(\bxe(T))-  h'_{x(T)}(\bxe(0), \bxe(T))^\top.\bar{\lambda}_{\epsilon}\label{eq:penalized_pontryagin_extremal_6}
\end{align}
\end{subequations}
\end{dfntn}

\begin{rmrk}
 Handling mixed constraints using an interior point method rather than a Pontryagin minimization has two main advantages. Firstly, it allows for using off-the-shelves index-1 BVPDAE solvers. Secondly, it also allows for implementing primal-dual methods in optimal control as described in \cite{Weiser}, which are numerically efficient as will be illustrated later in \cref{sec:algoExample} and in the numerical examples available at \url{https://ifpen-gitlab.appcollaboratif.fr/detocs/ipm_ocp}.
\end{rmrk}
\subsection{Contribution of the paper}
The main contribution of the paper is a convergence theorem for interior point methods optimal control problem with logarithmic penalty functions.
\begin{thrm}
\label{thm:first_order_convergence}
Let $(\epsilon_n)$ be a sequence of decreasing positive parameters with $\epsilon_n\rightarrow0$. The associated sequence of penalized stationary points $(\bx_{\epsilon_n}, \bu_{\epsilon_n}, \bar{p}_{\epsilon_n}, \bar{\lambda}_{\epsilon_n})_n$ as defined in \cref{def:penalized_pontryagin_extremal} contains a subsequence converging to a stationary point $(\bx, \bu,\bp, \bar{\mu}, \bar{\nu},\bar{\lambda})$ of the original problem as defined in \cref{def:pontryagin_extremal} as follows
\begin{subequations}
\begin{align}
     \norm{\buenk - \bu}_{\xLone} & \rightarrow 0 \label{eq:conv_u_L2}\\
      \norm{\bxenk -\bx}_{\xLinfty}&\rightarrow 0\label{eq:conv_x_Linfty}\\
      \norm{h\left(\bxenk(0), \bxenk(T)\right)-h\left(\bx(0), \bx(T)\right)}& \rightarrow 0\label{eq:conv_if}\\
      \left\vert J_{\epsilon_{n_k}}(\bxenk, \buenk) - J(\bx, \bu)\right\vert & \rightarrow 0 \label{eq:conv_cost}\\
      \norm{\bar{\lambda}_{\epsilon_{n_k}} - \bar{\lambda}}&\rightarrow 0 \label{eq:conv_lambda}\\
  \norm{\bpenk - \bar{p}}_{\xLone} &\rightarrow 0\label{eq:conv_p_L1}\\
\textcolor{black}{\epsilon_{n_k}\psi'\circ c_i(\bxenk, \buenk)}&\textcolor{black}{ \stackrel{\ast}{\rightharpoonup} \bar{\nu}_i ,\;\;\;i=1,\dots,n_c}\label{eq:conv_weak_nu}\\
   \epsilon_{n_k}\psi'\circ g_i(\bxenk)\xdt&\stackrel{\ast}{\rightharpoonup} \xdn{\bar{\mu}}_i ,\;\;\;i=1,\dots,n_g\label{eq:weak_star_conv_pen_state}
\end{align}
\end{subequations}
\end{thrm}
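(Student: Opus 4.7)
\noindent The plan is to extract weakly convergent subsequences from the uniform bounds on $\epsilon_n\psi'\circ g_i(\bxen)$ and $\epsilon_n\psi'\circ c_i(\bxen,\buen)$ established in \cref{sec:interiority}, pass to the limit in the penalized stationarity system of \cref{def:penalized_pontryagin_extremal}, and finally bootstrap to the strong convergences using the affine-in-$u$ structure together with \cref{ass:qualification_condition_mixed_constraint}.

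\medskip
\noindent\textbf{Step 1 (compactness and identification of the weak limits).} By \cref{ass:bounded_control_set}, $\buen$ is bounded in $\xLinfty$, so along a subsequence $\buen\stackrel{\ast}{\rightharpoonup}\bu$. The trajectories $\bxen$ are equicontinuous and uniformly bounded (since $f$ is smooth on a compact set containing their range), hence Arzelà--Ascoli yields $\bxen\to\bx$ uniformly, giving \eqref{eq:conv_x_Linfty} and, by continuity of $h$, \eqref{eq:conv_if}. Passing to the limit in the integral form of the dynamics, which is licit because $v\mapsto\int_0^\cdot f_1(\bxen)+f_2(\bxen)v\,\xdt$ is strongly continuous once $\bxen\to\bx$ uniformly, shows that $(\bu,\bx)$ satisfies \eqref{eq:first_order_1}. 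The uniform bounds of \cref{sec:interiority} yield, after a further extraction, $\epsilon_n\psi'\circ c_i(\bxen,\buen)\stackrel{\ast}{\rightharpoonup}\bar\nu_i$ in $\xLinfty$ and $\epsilon_n\psi'\circ g_i(\bxen)\xdt\stackrel{\ast}{\rightharpoonup}\xdn{\bar\mu}_i$ in $\mathcal{M}([0,T])$, which are exactly \eqref{eq:conv_weak_nu} and \eqref{eq:weak_star_conv_pen_state}. The sign conditions \eqref{eq:first_order_11}--\eqref{eq:first_order_12} follow from $\psi'(y)=-1/y>0$ for $y<0$, valid by interiority of the penalized solutions.

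\medskip
\noindent\textbf{Step 2 (multiplier bound, adjoint limit, and complementary slackness).} To control $\bar\lambda_{\epsilon_n}$, I argue by contradiction: if $\|\bar\lambda_{\epsilon_n}\|\to+\infty$, dividing the full dual system by $\|\bar\lambda_{\epsilon_n}\|$ and passing to the limit as in Step 1 produces a nontrivial singular multiplier, contradicting \cref{ass:robinson_qualification}; hence \eqref{eq:conv_lambda} holds along a further subsequence. The transversality conditions \eqref{eq:penalized_pontryagin_extremal_5}--\eqref{eq:penalized_pontryagin_extremal_6} then pass to the limit by continuity, and integrating the penalized adjoint equation backwards from $T$ against continuous test functions identifies a $\xBV$ limit $\bp$ satisfying \eqref{eq:first_order_2}; equi-integrability of the right-hand side upgrades this to the $\xLone$ convergence \eqref{eq:conv_p_L1}. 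Complementary slackness \eqref{eq:first_order_7}--\eqref{eq:first_order_8} follows from the key identity $\epsilon_n\psi'(y)\cdot y=-\epsilon_n$: applying it to $y=c_i(\bxen,\buen)$ and $y=g_i(\bxen)$, integrating, and passing to the limit (using uniform convergence $g_i\circ\bxen\to g_i\circ\bx$ for the measure term and weak-$\ast$/weak duality for the mixed term) yield the vanishing complementary products. Cost convergence \eqref{eq:conv_cost} comes from dominated convergence on the running cost plus the observation that $\epsilon_n\int_0^T\psi\circ g_i(\bxen)\,\xdt\to 0$ by interiority.

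\medskip
\noindent\textbf{Step 3 (strong convergence of the control, the main obstacle).} Because $H$ is affine in $u$, condition \eqref{eq:penalized_pontryagin_extremal_3} reduces to the pointwise algebraic relation $\ell_2(\bxen)+f_2(\bxen)^\top\bpen+a(\bxen)^\top\bar\nu_{\epsilon_n}=0$, which passes to the limit and yields \eqref{eq:first_order_3}. Upgrading $\buen\rightharpoonup\bu$ to strong $\xLone$ convergence is the delicate point: I exploit the identity $c_i(\bxen,\buen)=-\epsilon_n/\bar\nu_{i,\epsilon_n}$ together with \cref{ass:qualification_condition_mixed_constraint}, which supplies a uniform left inverse of $a(\bxen)^\top$ restricted to the near-active indices $I^c_{\buen,\bxen(0)}(n)$. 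Partitioning $[0,T]$ into the near-saturated set $\supp^c_{\buen,\bxen(0)}(n)$ and its complement, on the former one expresses $\buen$ algebraically in terms of $(\bxen,\bpen,\bar\nu_{\epsilon_n})$, all of which converge strongly on the active components, while on the latter the penalty term vanishes uniformly and the affine Hamiltonian structure determines $\bu$ directly. A Vitali-type argument combining equi-integrability of $\buen$ with the pointwise convergence obtained on each piece then delivers \eqref{eq:conv_u_L2}. The main obstacle lies precisely in handling touch points and the non-monotone evolution of the active index set between the penalized and limiting problems, which is what the near-saturation formalism of \cref{def:mui} and the index sets in \eqref{eq:def_supp_g}--\eqref{eq:def_indices_c} are designed to support.
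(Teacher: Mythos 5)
Your Steps 1 and 2 follow essentially the same route as the paper: weak/weak-$\ast$ compactness from the uniform bounds of \cref{thm:interior_state,thm:interior_mixed_const,lem:boundedness_ci}, identification of the limits via the affine structure (\cref{lemme:weak_strong_l2_conv}), the identity $\epsilon_n\psi'(y)\,y=-\epsilon_n$ for complementary slackness, and a Gr\"onwall-type argument for the adjoint (the paper inserts an auxiliary adjoint $q_n$ driven by the limit multipliers rather than testing against indicators, but the substance is the same). Your justification of the boundedness of $\bar{\lambda}_{\epsilon_n}$ via \cref{ass:robinson_qualification} is in fact more explicit than the paper's, which simply asserts it.

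The genuine gap is in Step 3, which you rightly single out as the main obstacle. On the complement of the near-saturated set you claim that the penalty vanishes and ``the affine Hamiltonian structure determines $\bu$ directly.'' This is exactly backwards: because $H$ is affine in $u$, the stationarity condition \cref{eq:penalized_pontryagin_extremal_3} reads $\ell_2(\bxen)+f_2(\bxen)^\top.\bpen+\epsilon_n\sum_i\psi'(c_i(\bxen,\buen))a_i(\bxen)=0$, and the only $u$-dependence sits inside the penalty terms; once those are negligible the condition is independent of $u$ and determines nothing (this is precisely the singular-arc phenomenon that makes affine problems hard). On the near-saturated set your algebraic expression only pins down the components of $\buen$ along the active gradients $a_{I}(\bxen)$, not the full control, and it invokes $\bar{\nu}_{\epsilon_n}$, which converges only weak-$\ast$, not strongly. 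The paper's mechanism is different and avoids the partition entirely: for every fixed $\epsilon_n>0$ the full penalized pre-Hamiltonian $H^\psi$ is \emph{strictly} convex in $u$ because of the sum of all log-barrier terms, so the implicit function theorem gives $\buen(t)=\lambda_t(\bxen(t),\bpen(t),\epsilon_n)$ pointwise; the strong convergence of $\bxen$ and $\bpen$ then yields pointwise convergence of $\buen$ to some $z$, and Mazur's lemma applied to the weakly convergent sequence $(\buen)_n$ identifies $z=\bu$ almost everywhere, after which $\xLinfty$-boundedness gives the $\xLone$ convergence \cref{eq:conv_u_L2}. Without some substitute for this convexity-plus-Mazur step, your Step 3 does not close.
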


\section{Preliminary results}
\label{sec:preliminary_results}
This section gathers useful definitions and preliminary results, which will be recurrently used throughout the paper. The proofs of these results are given in \cref{sec:proof_annex}.

\begin{prpstn}[State Lipschitz continuity]
\label{prop:major_Linf_L1}
Let $u_1,u_2\in\U$ and $x_1^0, x_2^0 \in \R^n$, there exists $\const(f) < +\infty $ such that
\begin{equation}
    \parallel x[u_1, x_1^0]-x[u_2, x_2^0]\parallel_{\xLinfty} \leq \const(f)(\parallel u_1 - u_2\parallel_{\xLone} + \norm{x_1^0 - x_2^0})
\end{equation}
\end{prpstn}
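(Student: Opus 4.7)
The plan is a classical Gronwall argument. The only delicate point is to arrange matters so that the control difference $u_1-u_2$ enters only through its $\xLone$-norm, which is achieved via the control-affine structure of $f$. First, by \cref{ass:bounded_control_set}, bounding $\norm{(u_i,x_i^0)}_{\Vad}\leq R_v$ forces $x_i:=x[u_i,x_i^0]$ into a compact set $K\subset \R^n$ depending only on $R_v$, on which the $\xCtwo$-smoothness of $f_1,f_2$ from \cref{ass:C1_data} yields finite constants $L_1,L_2,M$ such that
\begin{equation*}
\norm{f_1(x)-f_1(y)}\leq L_1\norm{x-y},\quad \norm{f_2(x)-f_2(y)}\leq L_2\norm{x-y},\quad \norm{f_2(x)}\leq M,\quad\forall x,y\in K.
\end{equation*}

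Next, I would split the dynamics difference using affinity,
\begin{equation*}
f(x_1,u_1)-f(x_2,u_2)=\bigl(f_1(x_1)-f_1(x_2)\bigr)+\bigl(f_2(x_1)-f_2(x_2)\bigr)u_1+f_2(x_2)(u_1-u_2),
\end{equation*}
which gives the pointwise bound $(L_1+L_2|u_1(s)|)\norm{x_1(s)-x_2(s)}+M\norm{u_1(s)-u_2(s)}$. The crucial observation is that $u_1-u_2$ is multiplied only by the state-dependent, uniformly bounded factor $f_2(x_2)$, whose bound $M$ does not depend on any control; hence integrating in time produces the $\xLone$-norm of $u_1-u_2$ rather than its $\xLinfty$-norm.

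Finally, integrating in time and using $\int_0^T(L_1+L_2|u_1(s)|)\xdn{s}\leq L_1 T+L_2 R_v$, which holds because \cref{ass:bounded_control_set} gives $\norm{u_1}_{\xLone}\leq R_v$, the variable-coefficient Gronwall lemma applied to
\begin{equation*}
\norm{x_1(t)-x_2(t)}\leq \norm{x_1^0-x_2^0}+M\norm{u_1-u_2}_{\xLone}+\int_0^t\bigl(L_1+L_2|u_1(s)|\bigr)\norm{x_1(s)-x_2(s)}\xdn{s}
\end{equation*}
yields the claim with $\const(f):=\max(1,M)\exp(L_1 T+L_2 R_v)$. No step is a genuine obstacle; the only point requiring care is the use of Gronwall with $\xLone$-coefficient, made necessary precisely because \cref{ass:bounded_control_set} only provides an $\xLone$-bound (not an $\xLinfty$-bound) on $u_1$.
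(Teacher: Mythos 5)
Your proof is correct and follows essentially the same route as the paper's: bound the difference of the dynamics pointwise using the control-affine structure and the compactness of the state range, then integrate and apply Gr\"onwall. If anything, your version is slightly more careful than the paper's, since you keep the $L_2\vert u_1(s)\vert$ factor inside the Gr\"onwall coefficient and control its integral via the $\xLone$-bound $R_v$ from \cref{ass:bounded_control_set}, whereas the paper absorbs the dependence on $u_1$ into a constant $\const(f)$ without comment.
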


\begin{proof}
See \cref{sec:proof_lipschitz}
\end{proof}

\begin{prpstn}
\label{lemme:weak_strong_l2_conv}
    Let $(u_n, x^0_n)_n$ converging to $(\bu, \bx_0)$ in the weak topology $\sigma(\xLtwo\times \R^n, \xLtwo \times \R^n)$ then we have
    \begin{equation}
        \norm{x[u_n, x^0_n] - x[\bu, \bx^0]}_{\xLinfty} \rightarrow 0
    \end{equation}
    and for all $\alpha \in \xCzero(\R^n;\R^{p \times m})$, $\beta \in \xCzero(\R^n;\R^p)$ we have 
    \begin{equation}
        \alpha(x[u_n, x_n^0]).u_n + \beta(x[u_n, x_n^0]) \stackrel{\ast}{\rightharpoonup} \alpha(x[\bu, \bx^0]).\bu + \beta(x[\bu, \bx^0])\label{eq:weak_ast_conv_mixed_const}
    \end{equation}
\end{prpstn}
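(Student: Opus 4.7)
The plan is to upgrade the weak convergence of $(u_n, x_n^0)$ to uniform convergence of the trajectories $x_n := x[u_n, x_n^0]$ via an Arzelà--Ascoli argument, and then to deduce the weak-$\ast$ convergence of the affine composite $\alpha(x_n) u_n + \beta(x_n)$ from a standard strong--weak decomposition. The main analytic obstacle will be passing to the limit in the bilinear term $\int_0^t f_2(x_n) u_n\,\xd s$, where one factor converges only weakly.

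\textbf{Step 1: Compactness of $(x_n)$.} Weak convergence in $\xLtwo\times\R^n$ and Banach--Steinhaus give $\sup_n \norm{u_n}_{\xLtwo} + \sup_n \norm{x_n^0} < +\infty$, hence also $\sup_n \norm{u_n}_{\xLone} < +\infty$ by Cauchy--Schwarz on $[0,T]$. \Cref{prop:major_Linf_L1} then furnishes a uniform $\xLinfty$ bound on $x_n$, and continuity of $f_1, f_2$ (\cref{ass:C1_data}) yields uniform $\xLinfty$ bounds on $f_1(x_n)$ and $f_2(x_n)$. From $\dot x_n = f_1(x_n) + f_2(x_n) u_n$ and Cauchy--Schwarz on $[s,t]$ I expect the modulus-of-continuity estimate
\begin{equation*}
\norm{x_n(t)-x_n(s)} \leq \const(f)\,|t-s| + \const(f)\,|t-s|^{1/2}\norm{u_n}_{\xLtwo},
\end{equation*}
giving equicontinuity. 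Arzelà--Ascoli then ensures that every subsequence of $(x_n)$ admits a further subsequence converging uniformly to some $x^\star \in \xCzero([0,T];\R^n)$.

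\textbf{Step 2: Identification of the limit.} I will pass to the limit in the Volterra formulation through the decomposition
\begin{equation*}
x_n(t) = x_n^0 + \int_0^t f_1(x_n(s))\,\xd s + \int_0^t \bigl(f_2(x_n(s))-f_2(x^\star(s))\bigr) u_n(s)\,\xd s + \int_0^t f_2(x^\star(s)) u_n(s)\,\xd s.
\end{equation*}
Uniform convergence $x_n \to x^\star$ with continuity of $f_2$ gives $\norm{f_2(x_n)-f_2(x^\star)}_{\xLinfty} \to 0$, so the middle integral vanishes by the $\xLtwo$-bound on $u_n$. The first integral converges by uniform convergence and continuity of $f_1$, and the last integral converges by the weak convergence $u_n \rightharpoonup \bu$ tested against $f_2(x^\star)\mathds{1}_{[0,t]} \in \xLtwo$. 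Hence $x^\star$ solves the Cauchy problem associated with $(\bu, \bx^0)$, so $x^\star = x[\bu, \bx^0]$ by uniqueness of ODE solutions. Since every subsequence has a sub-subsequence converging to the same limit, the entire sequence $(x_n)$ converges uniformly, establishing the first claim.

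\textbf{Step 3: Weak-$\ast$ convergence of the affine expression.} Writing $\bx := x[\bu, \bx^0]$, I will use the decomposition
\begin{equation*}
\alpha(x_n) u_n + \beta(x_n) - \alpha(\bx)\bu - \beta(\bx) = \bigl(\alpha(x_n)-\alpha(\bx)\bigr) u_n + \alpha(\bx)(u_n - \bu) + \bigl(\beta(x_n)-\beta(\bx)\bigr).
\end{equation*}
Continuity of $\alpha, \beta$ together with the uniform convergence from Step~2 yield $\norm{\alpha(x_n)-\alpha(\bx)}_{\xLinfty} + \norm{\beta(x_n)-\beta(\bx)}_{\xLinfty} \to 0$, so the outer two terms tend to zero strongly in $\xLtwo$ (using the $\xLtwo$-bound on $u_n$ for the first). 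The middle term converges weakly to zero in $\xLtwo$ since $\alpha(\bx)^\top\phi \in \xLtwo$ for every $\phi \in \xLtwo$, which delivers the claimed convergence \eqref{eq:weak_ast_conv_mixed_const}.
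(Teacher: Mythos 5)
Your argument for the uniform convergence of $x[u_n,x_n^0]$ is essentially the paper's: uniform bounds plus a Cauchy--Schwarz modulus-of-continuity estimate, Arzel\`a--Ascoli, identification of the limit by passing to the limit in the Volterra equation with the strong--weak splitting of the bilinear term $f_2(x_n)u_n$, and the subsequence principle to upgrade to the whole sequence (the paper phrases that last step as a contradiction argument, and its identification step is in fact terser and less explicit than your Step~2). For the second claim the routes differ only cosmetically: the paper observes that $\alpha(x_n).u_n+\beta(x_n)$ is uniformly $\xLinfty$-bounded, extracts a weak-$\ast$ convergent subsequence, and identifies its limit by testing against the dense subspace $\xLtwo\cap\xLone\subset\xLone$, whereas you compute directly from the three-term decomposition and obtain weak convergence in $\xLtwo$. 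The one point to patch is the topology of the conclusion: the statement asserts weak-$\ast$ convergence in $\xLinfty=(\xLone)^*$, i.e.\ convergence against every $\xLone$ test function, and your weak-$\xLtwo$ convergence yields this only after you also invoke the uniform $\xLinfty$ bound on $\alpha(x_n).u_n+\beta(x_n)$ (available in the paper's setting, where admissible controls are essentially bounded) so that convergence on the dense subspace $\xLtwo\subset\xLone$ extends to all of $\xLone$; with that one added sentence your proof delivers exactly the stated conclusion.
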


\begin{proof}
    See \cref{app:weak_strong_convergence}
\end{proof}

\begin{rmrk}
The affine property of the dynamics and the mixed constraints \cref{eq:def_dynamics,eq:def_mixed_const} are crucial in the proof of \cref{lemme:weak_strong_l2_conv}. Indeed, if the dynamics, for example, is not affine with respect to the control, the sequence $(x[u_n])_n$  uniformly converges to a limit which can be different from $x[\bu]$. Take, for example, $\dot{x}(t) = u(t)^2$ and $u_n:= \sin(nt)$. Then $u_n\rightharpoonup \bu=0$ and $x_n(t)\rightarrow t/2\neq x[\bu](t)$.
\end{rmrk}

\begin{prpstn}\label{prop:lower_bound_measure}
For all $u\in\U$ satisfying \cref{ass:bounded_control_set}, for all $x^0\in \R^n$ bounded, let $E \subseteq g_i\circ x[u, x^0]([0,T])\subset\R$ be a Lebesgue-measurable set, the state-constraint measure from \cref{def:mui} is lower bounded as follows
\begin{equation}
\label{eq:lower_bound_measure}
   m[u, x^0 ,g_i](E)\geq \const(f,g)\textrm{meas}(E)
\end{equation}
\end{prpstn}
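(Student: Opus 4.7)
The plan is to reduce the claim to a one-variable distortion inequality applied to the scalar function $\phi(t) := g_i(x[u,x^0](t))$ on $[0,T]$. By \cref{def:mui} one has $m[u,x^0,g_i](E) = \textrm{meas}(\phi^{-1}(E))$, so the goal becomes exactly to show $\textrm{meas}(\phi^{-1}(E)) \geq \const(f,g)\,\textrm{meas}(E)$.

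First I would check that $\phi$ is Lipschitz on $[0,T]$. Since $u \in \U = \xLinfty$ and, by \cref{ass:bounded_control_set}, $x[u,x^0]$ remains in a fixed compact ball $B_{\norm{.}}(0,R_x)$, the chain rule yields $\phi'(t) = g_i'(x(t)).(f_1(x(t))+f_2(x(t)).u(t))$ a.e.; the $\xCtwo$ regularity from \cref{ass:C1_data} then produces a pointwise bound $|\phi'(t)|\leq L$ with $L$ depending only on $f$, $g$ and the fixed radii from \cref{ass:bounded_control_set}. Next I would apply the one-dimensional area formula (Banach's indicatrix theorem), valid for any absolutely continuous $\phi:[0,T]\to\R$ and any Lebesgue-measurable $E\subseteq\R$:
\[
\int_{\phi^{-1}(E)} |\phi'(t)|\,\xdt \;=\; \int_E N(\phi,y)\,\xdn{y}, \qquad N(\phi,y):=\#\phi^{-1}(\{y\}).
\]
The hypothesis $E \subseteq (g_i\circ x[u,x^0])([0,T])=\phi([0,T])$ forces $N(\phi,y)\geq 1$ for $y\in E$, whence
\[
\textrm{meas}(E) \;\leq\; \int_E N(\phi,y)\,\xdn{y} \;=\; \int_{\phi^{-1}(E)} |\phi'(t)|\,\xdt \;\leq\; L\cdot\textrm{meas}(\phi^{-1}(E)),
\]
which is precisely \cref{eq:lower_bound_measure} with $\const(f,g)=L^{-1}$.

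The sensitive point — and the main obstacle to watch for — is that \cref{ass:bounded_control_set} controls admissible $u$ only in $\xLone$, not uniformly in $\xLinfty$, so the naive bound on $\phi'$ above a priori depends on $\norm{u}_{\xLinfty}$. One therefore has to either extract a uniform $\xLinfty$ bound on admissible $u$ (using \cref{ass:qualification_condition_mixed_constraint} together with state boundedness and the affine structure of $f$) or read $\const(f,g)$ as also incorporating the a priori constants from \cref{ass:bounded_if_state_set,ass:bounded_control_set}. Once this is settled, the absolute continuity of $\phi$ and the validity of Banach's indicatrix formula are classical, and the rest of the argument is routine.
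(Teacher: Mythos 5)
Your argument is correct, but it routes the key step through a different tool than the paper does. The paper also starts from the Lipschitz bound $\vert g_i(x[u,x^0](t))-g_i(x[u,x^0](s))\vert\leq \const(f,g)\vert t-s\vert$, but then argues elementarily: it establishes the inequality only for intervals $(\alpha_1,\alpha_2)\subseteq E$, using an intermediate-value argument to produce times $t_1,t_2$ with $g_i(x[u,x^0](t_j))=\alpha_j$ and $(t_1,t_2)\subseteq (g_i\circ x[u,x^0])^{-1}((\alpha_1,\alpha_2))$, so that $m[u,x^0,g_i]((\alpha_1,\alpha_2))\geq\vert t_1-t_2\vert\geq \const(f,g)\vert\alpha_1-\alpha_2\vert$; the passage from intervals to a general measurable $E$ is left implicit (it follows from countable additivity on open sets plus outer regularity of the two measures). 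Your use of the Banach indicatrix / one-dimensional area formula replaces both the intermediate-value step and that unstated extension: it treats an arbitrary measurable $E$ in one stroke, at the price of invoking a less elementary theorem. Your closing caveat is well taken and applies equally to the paper's own proof: \cref{ass:bounded_control_set} only bounds $\norm{u}_{\xLone}$, so the pointwise bound on the derivative of $t\mapsto g_i(x[u,x^0](t))$ genuinely depends on $\norm{u}_{\xLinfty}$; the paper implicitly absorbs this by restricting \cref{def:mui} to $u\in B_{\xLinfty}(0,R_u)$, so the honest constant is $\const(f,g,R_u,R_x)$ rather than $\const(f,g)$. Reading the constant that way, as you propose in your second resolution, is exactly what the paper does.
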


\begin{proof}
See \cref{sec:proof_measure}

\end{proof}

\begin{prpstn}
\label{prop:maximal_distance}
For all $\delta>0$, $\exists (G_\delta, C_\delta)>0$ such that, $\forall (u, x^0) \in \Vad_\infty$, $\exists (v,y^0)\in B_{\norm{.}_{\Vad}}((u, x^0),\delta)\cap\Vad$ satisfying the following condition

\begin{align}
    \sup_t g_i(x[v, y^0](t))&\leq -2G_\delta,\;i=1,\dots,n_g\\
    \esssup_t c_i(x[v, y^0](t), v(t))&\leq -2C_\delta,\;i=1,\dots,n_c
\end{align}
and we also have
\begin{align}
    g_i(x[v, y^0](t))\leq g_i(x[u, x^0](t))-G_\delta,\;\forall t\in S^g_{u,x^0}(G_\delta),\;i=1,\dots,n_g\\ 
    c_i(x[v, y^0](t), v(t))\leq c_i(x[u, x^0](t), u(t))-C_\delta,\;\textrm{a.a. } t\in S^c_{u,x^0}(C_\delta),\;i=1,\dots,n_c
\end{align}
     
\end{prpstn}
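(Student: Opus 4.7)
My plan is to combine the closure definition of $\Vad_\infty$ with a convex-perturbation argument leveraging \cref{prop:major_Linf_L1}. Since the essential bound $\esssup c\leq -1/n$ is strictly stronger than $\esssup c\leq -1/(n+1)$, the sets $\VadStrict(n)$ are monotone increasing in $n$, so $\liminf_n \VadStrict(n) = \bigcup_N \VadStrict(N)$ and $\Vad_\infty$ is the $\norm{.}_{\Vad}$-closure of this union.

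Given $(u, x^0)\in \Vad_\infty$ and $\delta>0$, the first step is to extract an approximant $(\tilde v, \tilde y^0)\in\VadStrict(N)$ for some $N\in\mathds{N}_\ast$ with $\norm{(\tilde v - u, \tilde y^0 - x^0)}_{\Vad}\leq \delta/2$; this yields the strict pointwise bounds $g_i(x[\tilde v, \tilde y^0](t))<0$ and $c_i(x[\tilde v, \tilde y^0](t), \tilde v(t))\leq -1/N$ a.e. The second step is to push further into the interior by setting $(v, y^0):=(u, x^0)+s[(\tilde v, \tilde y^0)-(u, x^0)]$ with a step $s\in(0,1]$ such that $s\norm{(\tilde v - u, \tilde y^0 - x^0)}_{\Vad}\leq \delta$. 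The affine structure of the dynamics \cref{eq:def_dynamics} and of the mixed constraints \cref{eq:def_mixed_const}, combined with \cref{prop:major_Linf_L1}, yields $\norm{x[v, y^0] - x[u, x^0]}_{\xLinfty}=O(s)$ with implied constants depending only on $f$, and permits a first-order expansion of $g_i(x[v,y^0])$ and $c_i(x[v,y^0], v)$ in $s$ with a controllable quadratic remainder.

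The four inequalities then follow from this expansion. On the near-saturated set $S^g_{u,x^0}(G_\delta)$ one has $g_i(x[u, x^0])\geq -G_\delta$, while the endpoint trajectory $x[\tilde v, \tilde y^0]$ lies strictly inside the feasible set, so a step of size $s$ toward it should reduce each $g_i$ by an amount linear in $s$ up to the remainder. Calibrating $G_\delta$ proportional to $s$ and to the reference interior margin then produces $g_i(x[v, y^0])\leq g_i(x[u, x^0])-G_\delta$ on $S^g_{u,x^0}(G_\delta)$, and an analogous argument, using the affine dependence of $c$ on $u$, yields the matching bound on $S^c_{u,x^0}(C_\delta)$. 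Combined with $g_i(x[u, x^0])\leq -G_\delta$ off $S^g_{u,x^0}(G_\delta)$, these improvement inequalities close up to give the global bounds $\sup_t g_i(x[v, y^0])\leq -2G_\delta$ and $\esssup_t c_i(x[v,y^0],v)\leq -2C_\delta$.

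The hard part will be securing $G_\delta$ and $C_\delta$ that are genuinely uniform across $(u, x^0)\in \Vad_\infty$: the naive interior margin $-\sup_t g_i(x[\tilde v, \tilde y^0])$ depends on the particular approximant and can a priori degenerate along sequences in $\Vad_\infty$. I expect this to be overcome either by a compactness–contradiction argument exploiting the $\xLone$-boundedness of $\Vad$ from \cref{ass:bounded_control_set}, or by an explicit appeal to the qualification conditions \cref{ass:qualification_condition_mixed_constraint,ass:robinson_qualification} to produce uniform interior-descent directions at each $(u, x^0)$. Once this uniformity is in hand, the remaining verifications amount to careful bookkeeping of Lipschitz constants from \cref{prop:major_Linf_L1}.
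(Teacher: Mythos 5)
There are genuine gaps here, and the route you chose is considerably more complicated than what the statement requires. The central unsupported step is your claim that a small step $s$ from $(u,x^0)$ toward the interior approximant ``should reduce each $g_i$ by an amount linear in $s$.'' The map $(u,x^0)\mapsto g_i(x[u,x^0](t))$ is not convex (the dynamics is affine in $u$ but nonlinear in $x$, and $g_i$ is nonlinear), so the fact that the full step lands strictly inside the feasible set says nothing about the sign of the first-order term of your expansion on $S^g_{u,x^0}(G_\delta)$; the directional derivative there can perfectly well be nonnegative, and your ``controllable quadratic remainder'' cannot rescue a first-order term of the wrong sign. The interpolation is also unnecessary: your approximant $(\tilde v,\tilde y^0)$ already lies in $B_{\norm{.}_{\Vad}}((u,x^0),\delta)$, and the paper simply takes such an approximant itself as $(v,y^0)$. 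Once one has the global bounds $\sup_t g_i(x[v,y^0](t))\leq -2G_\delta$ and $\esssup_t c_i(x[v,y^0],v)\leq -2C_\delta$, the two ``improvement'' inequalities are immediate arithmetic, with no expansion at all: on $S^g_{u,x^0}(G_\delta)$ one has $g_i(x[u,x^0](t))\geq -G_\delta$ by definition, hence $g_i(x[v,y^0](t))-g_i(x[u,x^0](t))\leq -2G_\delta+G_\delta=-G_\delta$, and likewise for $c$. Note that the correct logical order is global bound first, improvement second; your plan runs it the other way and the constants do not close (improvement by $G_\delta$ on the near-saturated set, where $g_i(x[u,x^0])$ may be as large as $0$, only yields $g_i(x[v,y^0])\leq -G_\delta$, not $-2G_\delta$).

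The second gap is the one you flag yourself: the uniformity of $G_\delta$ and $C_\delta$ over $\Vad_\infty$, which you leave as ``I expect this to be overcome either by\dots'' A proof cannot end there. The paper's mechanism is to \emph{define} the uniform constants as suprema of the pointwise quantities: it sets $\gamma_\delta(u,x^0)$ to be the infimum over the ball $B_{\norm{.}_{\Vad}}((u,x^0),\delta)\cap\Vad$ of $\sup_t g(x[v,y^0](t))$, which is negative by \cref{ass:interior_accessibility}, and then puts $-2G_\delta:=\sup_{(u,x^0)\in\Vad_\infty}\gamma_\delta(u,x^0)$; similarly $-2C_\delta:=\sup_{(u,x^0)}(-1/N_\delta[u,x^0])$ where $N_\delta[u,x^0]$ is the index beyond which the approximating sequence in $\VadStrict(n)$ enters the $\delta$-ball. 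Neither the compactness--contradiction argument nor the appeal to \cref{ass:qualification_condition_mixed_constraint,ass:robinson_qualification} that you sketch is carried out, so as written your proposal does not establish the existence of constants independent of $(u,x^0)$, which is precisely the content of the proposition. Your preliminary observation that $\VadStrict(n)$ is nondecreasing and hence $\liminf_n\VadStrict(n)=\bigcup_N\VadStrict(N)$ is correct and consistent with how the paper uses \cref{ass:interior_accessibility}.
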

\begin{proof}
See \cref{sec:proof_maximal_distance}.
\end{proof}

\section{Uniform boundedness and interiority analysis of penalized optimal solutions}
\label{sec:interiority}
\subsection{State constraints analysis}
In the following, we present an interior point optimal control problem to handle pure state constraints, which writes:
\begin{subequations}
\label{eq:all_state_penalized_problem}
    \begin{align}
          \min_{(x, u)} J^1_\epsilon(x, u) =&\varphi(x(T)) + \int_0^T\left[\ell(x(t),u(t))+\epsilon\sum_{i=1}^{n_g} \psi(g_i(x(t))\right]\xdt\label{eq:def_state_pen_problem_1}\\
          \dot{x}(t) & = f(x(t),u(t))\label{eq:def_state_pen_problem_2}\\
          h(x(0), x(T)) & = 0 \label{eq:def_state_pen_problem_3}\\
          c(x(t), u(t))& \leq 0\label{eq:def_state_pen_problem_4}
    \end{align}
\end{subequations}

\begin{lmm}
\label{thm:interior_state}
$\forall \epsilon >0$, any locally optimal solution $(x[\ue, x^0_\epsilon], \ue)$ of Problem \cref{eq:all_state_penalized_problem} satisfies
\begin{equation}
    g_i(x[\ue, x^0_\epsilon](t)) < 0,\forall t\in[0,T],\;\;i=1,\dots,n_g\label{eq:interiority_state}
\end{equation}
and $\exists K_g<+\infty$ such that $\forall \epsilon \in(0,\epsilon_0)$ we have
\begin{equation}
    \norm{\epsilon \psi'\circ g_i(x[\ue, x^0_\epsilon])}_{\xLone}\leq K_g,i=1,\dots,n_g\label{eq:boundedness_lambda}
\end{equation}
\end{lmm}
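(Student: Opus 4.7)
Since $(\ue, x^0_\epsilon)$ is locally optimal, \cref{ass:interior_accessibility} places it in $\Vad_\infty$. For each $\delta > 0$, \cref{prop:maximal_distance} provides an admissible $(v_\delta, y^0_\delta) \in \Vad$ with $\|(v_\delta, y^0_\delta) - (\ue, x^0_\epsilon)\|_{\Vad} \le \delta$ and, uniformly, $g_i(x[v_\delta, y^0_\delta]) \le -2 G_\delta$, $c_j(x[v_\delta, y^0_\delta], v_\delta) \le -2 C_\delta$. For $\delta$ small enough, $(v_\delta, y^0_\delta)$ lies in the neighborhood of local optimality.

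\paragraph{Step 1: Interiority.} I argue by contradiction. If $g_i(x_\epsilon(t_0)) = 0$ for some $i$ and $t_0$, then finiteness of $J^1_\epsilon$ forces $g_i(x_\epsilon(\cdot)) \le 0$, so $t_0$ is a maximum and Taylor expansion yields $g_i(x_\epsilon(t)) = -c(t-t_0)^2 + o((t-t_0)^2)$ near $t_0$ for some $c \ge 0$. Consider the convex combination $u_\eta := (1-\eta)\ue + \eta v_\delta$, $x^0_\eta := (1-\eta)x^0_\epsilon + \eta y^0_\delta$. The strict interior margin $2 C_\delta$ on the mixed constraints of $(v_\delta, y^0_\delta)$ makes $(u_\eta, x^0_\eta)$ admissible for small $\eta$. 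By \cref{prop:major_Linf_L1}, $\|x[u_\eta, x^0_\eta] - x_\epsilon\|_{\xLinfty} = O(\eta)$, so the smooth part of the cost varies by $O(\eta)$. The perturbation pushes $g_i(x[u_\eta, x^0_\eta](t)) \le -c(t-t_0)^2 - C\eta + o(\eta)$ near $t_0$ for some $C > 0$, and asymptotic evaluation of the log-penalty gain
\begin{equation*}
\int \log\!\Bigl(1 + \tfrac{C\eta}{c(t-t_0)^2}\Bigr)\,\xdt = O(\sqrt{\eta})
\end{equation*}
dominates the $O(\eta)$ smooth-cost variation for small $\eta$. Consequently $J^1_\epsilon(x[u_\eta, x^0_\eta], u_\eta) < J^1_\epsilon(x_\epsilon, \ue)$, contradicting local optimality.

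\paragraph{Step 2: Uniform $L^1$-boundedness.} By Step 1, $\psi'\circ g_i(x_\epsilon) = -1/g_i(x_\epsilon) > 0$ pointwise. Set $u_s := (1-s)\ue + s v_\delta$, $x^0_s := (1-s) x^0_\epsilon + s y^0_\delta$, $\phi(s) := J^1_\epsilon(x[u_s, x^0_s], u_s)$. Local optimality gives the first-order condition $\phi'(0) \ge 0$, i.e., $A + B \ge 0$ with
\begin{equation*}
A = O(\delta), \qquad B = \epsilon \sum_{i=1}^{n_g} \int_0^T \psi'(g_i(x_\epsilon))\, g_i'(x_\epsilon)\, z\,\xdt,
\end{equation*}
where $z$ solves the linearized state equation and $|A| \le \const \cdot \delta$ by the Lipschitz estimates on $\varphi, \ell$. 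A Gronwall argument comparing $z$ with $x[v_\delta, y^0_\delta] - x_\epsilon$ gives $g_i'(x_\epsilon) z = g_i(x[v_\delta, y^0_\delta]) - g_i(x_\epsilon) + O(\delta^2)$. Plugging this expansion into $B$ and invoking the log-barrier identity $g \psi'(g) = -1$,
\begin{equation*}
B = -\epsilon n_g T + \epsilon \sum_i \int \psi'(g_i(x_\epsilon))\bigl(-g_i(x[v_\delta, y^0_\delta])\bigr)\,\xdt + O(\delta^2)\,\epsilon \sum_i \int \psi'(g_i(x_\epsilon))\,\xdt.
\end{equation*}
Since $-g_i(x[v_\delta, y^0_\delta]) \ge 2 G_\delta$ and $\psi' \ge 0$, the middle term is bounded below by $2 G_\delta\, \epsilon \sum_i \int \psi'(g_i(x_\epsilon))\,\xdt$. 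Substituting into $B \ge -\const \cdot \delta$ and absorbing the $O(\delta^2)$ term for $\delta$ small enough yields
\begin{equation*}
2 G_\delta\,\epsilon \int_0^T \psi'(g_i(x_\epsilon))\,\xdt \le \const \cdot \delta + \epsilon n_g T, \qquad i=1,\dots,n_g,
\end{equation*}
whence $\|\epsilon\, \psi' \circ g_i(x_\epsilon)\|_{\xLone} \le K_g := (\const \cdot \delta + \epsilon_0 n_g T)/(2 G_\delta)$ uniformly for $\epsilon \in (0, \epsilon_0)$.

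\paragraph{Main obstacle.} The delicate step is Step 1: quantifying the $\sqrt{\eta}$ gain from the log-penalty near a quadratic boundary touching and showing it dominates the $O(\eta)$ smooth-cost variation requires a careful asymptotic integration around $t_0$. Step 2 then proceeds by combining the first-order variational inequality at the penalized optimum with the log-barrier identity $g\psi'(g) = -1$ and the strict-interior margin $2 G_\delta$ supplied by \cref{prop:maximal_distance}.
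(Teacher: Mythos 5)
Your overall strategy --- contradict local optimality by comparing with the uniformly interior competitor supplied by \cref{prop:maximal_distance} --- is the same as the paper's, but your execution has two genuine gaps. The most serious one is that your competitor is not feasible. Both of your steps move along the convex combination $(u_\eta,x^0_\eta):=((1-\eta)\ue+\eta\vd,\,(1-\eta)x^0_\epsilon+\eta y^0_\delta)$, but the feasible set of Problem \cref{eq:all_state_penalized_problem} is not convex: the initial-final condition $h(x(0),x(T))=0$ is a nonlinear equality, and since the dynamics are nonlinear in $x$, $x[u_\eta,x^0_\eta]$ is not the convex combination of $x[\ue,x^0_\epsilon]$ and $x[\vd,y^0_\delta]$, so $h(x[u_\eta,x^0_\eta](0),x[u_\eta,x^0_\eta](T))=0$ fails generically; likewise the pointwise bound $c(x[u_\eta,x^0_\eta],u_\eta)\le 0$ is not inherited, because the error caused by $x[u_\eta,x^0_\eta]\neq(1-\eta)x[\ue,x^0_\epsilon]+\eta x[\vd,y^0_\delta]$ is of the same order $O(\eta)$ as the margin $-2\eta C_\delta$ gained from $\vd$. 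Without feasibility of the perturbed point, neither the cost comparison of your Step 1 nor the variational inequality $\phi'(0)\ge0$ of your Step 2 contradicts local optimality. The paper avoids this entirely by comparing directly with the fixed feasible point $(\vd,y^0_\delta)\in\Vad$ --- a zeroth-order comparison controlled by mean-value and intermediate-value estimates --- and never uses points on the segment.

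The second gap is the local model in Step 1. You expand $g_i(x_\epsilon(t))=-c(t-t_0)^2+o((t-t_0)^2)$, but $x_\epsilon\in\xWn{{1,\infty}}$ only, so $g_i\circ x_\epsilon$ is merely Lipschitz: it need not be twice differentiable in $t$, the contact set $\{t: g_i(x_\epsilon(t))=0\}$ need not be a single point, and you even allow $c=0$, for which your integral is undefined. The $O(\sqrt{\eta})$ gain is therefore an artifact of the quadratic ansatz. What is true in general --- and is exactly what \cref{prop:lower_bound_measure} encodes through the push-forward measure --- is that the Lipschitz bound $-g_i(x_\epsilon(t))\le\const(f,g)\vert t-t_0\vert$ yields a penalty gain of order $\eta\log(1/\eta)$, which still dominates $O(\eta)$; but your proof as written does not establish this. (A smaller issue in Step 2: absorbing the $O(\delta^2)$ linearization error into the coefficient $2G_\delta$ requires $2G_\delta$ to exceed that $O(\delta^2)$ constant, which \cref{prop:maximal_distance} does not guarantee since $G_\delta$ may shrink arbitrarily fast as $\delta\to0$.) The paper's proof replaces your first-order expansion with the exact change of variables $\int_{E_3(\rho)}\psi'\circ g(x[\ze](t))\,\xdt=\int_{-G_\delta}^{-\rho}\psi'(s)\,m[\ze,g](\xd s)$ together with the measure lower bound, which makes both the divergence at a touch point and the subsequent uniform $\xLone$ bound rigorous without any assumption on the local structure of the contact set.
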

\begin{proof}
It is sufficient to prove for $n_g=1$, that is to say, for just one state constraint. Assume $(\ue, x^0_\epsilon) \in \Vad_\infty$ is a local optimal solution satisfying 
\begin{equation}
\label{eq:ass_touchpoint_g}
    \sup_t g(x[\ue, x_\epsilon^0](t)) = 0
\end{equation}
From \cref{prop:maximal_distance}, $\forall \delta >0, \exists (\vd, x_\delta^0)\in B_{\norm{.}_{\Vad}}((\ue, x^0_\epsilon),\delta) \cap \VadStrict$ and $G_\delta >0$ such that
\begin{subequations}
    \begin{align}
        g(x[\vd, x_\delta^0](t))&\leq -2 G_\delta,\; \forall t \in[0,T]\\
         g(x[\vd, x_\delta^0](t))&\leq g(x[\ue, x^0_\epsilon](t))-G_\delta,\; \forall t\in \supp^g_{\ue, x^0_\epsilon}(G_\delta) \label{eq:inequality_sur_suppbue}
    \end{align}
\end{subequations}
with $\supp^g_{\ue, x^0_\epsilon}(G_\delta)\neq \emptyset$. In the following, to alleviate the notations, we denote 
\begin{align}
    \zd &:= (\vd, x_\delta^0)\\
    \ze &:= (\ue, x^0_\epsilon)\\
    \Delta z &:= \zd - \ze\\
    \Delta x &:= x[\zd] - x[\ze]\\
    \Delta g &:= g(x[\zd]) - g(x[\ze])
\end{align}
Now, one can exhibit an upper-bound on the difference $J^1_\epsilon(x[\zd], \vd)-J^1_\epsilon(x[\ze], \ue)$ as follows
\begin{equation}
    J^1_\epsilon(x[\zd], \vd)-J^1_\epsilon(x[\ze], \ue) = \Delta_1 + \epsilon \Delta_2\label{eq:def_split_g_epsilon}
\end{equation}
where
\begin{align}
    \Delta_1 &:= 
        \varphi(x[\zd](T))-\varphi(x[\ze](T)) 
        + \int_0^T\left[\ell(x[\zd](t),\vd(t))-\ell(x[\ze](t),\ue(t))\right]\xdt\label{eq:def_delta_1}\\
     \Delta_2 &:= \int_0^T\left[\psi\circ g(x[\zd](t))-\psi \circ g(x[\ze](t))\right]\xdt\label{eq:def_delta_2}
\end{align}
Now, let us upper-bound $\Delta_1$
\begin{align}
    \Delta_1 \leq&
    \int_0^T\const(\ell)\left(\norm{x[\zd](t)-x[\ze](t)}+\norm{\vd(t)-\ue(t)}\right)\xdt
    +\const(\varphi)\norm{\Delta x}_{\xLinfty}\\
    \leq&\const(\ell, T, \varphi)\norm{\Delta x}_{\xLinfty}+\const(\ell)\norm{\vd-\ue}_{\xLone} \leq\const(\ell,f,g, \varphi, T, R_v, R_x)\label{eq:upperbound_delta1}
\end{align}
Now, let us upper-bound $\Delta_2$ from \cref{eq:def_delta_2}. To do so, let us introduce the following useful subsets of $[0;T]$
 \begin{align}
    E_1&:=(g\circ x[\ze])^{-1}\left((-\infty,-G_\delta]\right)\label{eq:def_set_a1}\\
    E_2(\rho)&:=(g\circ x[\ze])^{-1}\left((-G_\delta,-\rho]\right)\label{eq:def_set_a2}\\
    E_3(\rho)&:=(g\circ x[\ze])^{-1}\left((-G_\delta,-\rho)\right)\label{eq:def_set_a3}
\end{align}
Given \cref{eq:ass_touchpoint_g}, for all $\rho\in [0,G_\delta)$, these sets are not empty and $\forall t \notin \left(E_1 \cup E_2(\rho)\right)$ we have $\psi\circ g(x[\zd](t))- \psi\circ g(x[\ze](t))<0$ which yields
\begin{equation}
    \Delta_2 \leq \int_{E_1} \psi \circ g(x[\zd](t)) - \psi \circ g(x[\ze](t))\xdt
    + \int_{E_2(\rho)} \psi\circ g(x[\zd](t))- \psi \circ g(x[\ze](t))\xdt
\end{equation}
By convexity of the $\log$ penalty, i.e. $\psi$, we have
\begin{equation}
    \label{eq:majoS1}
    \int_{E_1} \psi\circ g(x[\zd](t)) - \psi\circ g(x[\ze](t))\xdt\leq
    \int_{E_1} \psi'(G_\delta) \norm{\Delta g}_{\xLinfty} \xdt:=\const(T,f,g,G_\delta)
\end{equation}
In addition, $\forall t \in E_2(\rho)$, we have
\begin{equation}
     \int_{E_2(\rho)}\psi\circ g(x[\zd](t)) - \psi\circ g(x[\ze](t))\xdt =
      \int_{E_2(\rho)} \left(\int_0^1 \psi'(g(x[\ze](t) + s \Delta g(t)) \xd s\right)\Delta g(t) \xdt
\end{equation}
Since $\forall t\in E_2(\rho), \,\Delta g(t)  <  -G_\delta$, we also have 
\begin{equation}
    \label{eq:majoS2}
    \int_{E_2(\rho)}\psi\circ g(x[\zd](t)) - \psi\circ g(x[\ze](t))\xdt \leq
    -G_\delta\int_{E_2(\rho)}\left(\int_{0}^{1} \psi'(g(x[\ze](t)) + s\Delta g(t))\xd s\right) \xdt
\end{equation}
From the mean value theorem, $\forall t\in E_2(\rho)$, $\exists \sigma_t$ such that
\begin{equation}
\label{eq:mean_value_g}
    \psi'(g(x[\ze](t)) + \sigma_t \Delta g(t))= \int_{0}^{1} \psi'(g(x[\ze](t)) + s\Delta g(t))\xd s
\end{equation}
Since for all $t\in E_2(\rho)$, we have $g(x[\ze)(t)) - g(x[\zd](t))\geq G_\delta$ and since $\psi'$ is strictly increasing we have $\sigma_t \in (0,1)$ and 
\begin{equation}
    \psi'\circ g(x[\zd](t)) < \psi'(g(x[\ze](t)) + \sigma_t \Delta g(t)) <\psi'\circ g(x[\ze](t))
\end{equation}
From the intermediate value theorem, $\exists \bar{\sigma}\in(0,1)$ such that $\forall t\in E_2(\rho)$ we have
\begin{equation}
\label{eq:lower_bound_intermediate}
    \psi'(g(x[\ze](t)) + \sigma_t \Delta g(t)) \geq (1 - \bar{\sigma})\psi'\circ g(x[\ze](t)) + \bar{\sigma} \psi'\circ g(x[\zd](t))
\end{equation}
Gathering \cref{eq:majoS2,eq:mean_value_g,eq:lower_bound_intermediate} yields
\begin{multline}
    \int_{E_2(\rho)}\psi\circ g(x[\zd](t)) - \psi\circ g(x[\ze](t))\xdt  \\
   \leq - G_\delta \int_{E_2(\rho)}\bigg( (1-\bar{\sigma})\psi' \circ g(x[\ze](t)) + \bar{\sigma} \psi' \circ g(x[\zd](t)) \bigg)\xdt\\
    \leq- G_\delta\left( (1 - \bar{\sigma})\int_{E_2(\rho)}\psi' \circ g(x[\ze](t))\xdt + \const(\bar{\sigma}, \psi, T,g, G_\delta)\right)\label{eq:upperbound_delta2_over_E2}
\end{multline}
Gathering \cref{eq:majoS1,eq:upperbound_delta2_over_E2} we have 
\begin{equation}
    \label{eq:upperbound_delta2}
    \epsilon \Delta_2 \leq \const(T,f,g,G_\delta,\epsilon_0, \psi, \bar{\sigma}) -\epsilon G_\delta(1 - \bar{\sigma})\int_{E_2(\rho)}\psi' \circ g(x[\ze](t))\xdt
\end{equation}
Now, let us prove that any optimal solution is strictly interior with respect to the state constraint. The proof is by contradiction. Using \cref{def:mui}, one can make the following change in measure
\begin{equation}
\int_{E_3(\rho)}\psi' \circ g(x[\ze](t))\xdt = \int_{-G_\delta}^{-\rho}\psi'(s)m[\ze,g](\xd s)
\end{equation}
Then, using \cref{prop:lower_bound_measure,eq:def_set_a2,eq:def_set_a3} yields
\begin{align}
    \int_{E_2(\rho)}\psi' \circ g(x[\ze](t))\xdt&\geq\int_{E_3(\rho)}\psi' \circ g(x[\ze](t))\xdt\geq\const(f,g)\left(\psi(\rho)-\psi(G_\delta)\right)\label{eq:proof_change_variable}
\end{align}
Gathering \cref{eq:def_split_g_epsilon,eq:upperbound_delta1,eq:upperbound_delta2,eq:proof_change_variable} yields that $\forall \rho>0$ we have
\begin{equation}
    J^1_\epsilon(\zd) - J^1_\epsilon(\ze) \leq\const(\ell,f,g, \varphi, T, \epsilon_0,\psi, G_\delta, R_u, \bar{\sigma})
-\epsilon G_\delta \const(f,g, \bar{\sigma}) \left(\psi(\rho)-\psi(G_\delta)\right)
\end{equation}
For $\rho$ small enough, this yields $J^1_\epsilon(\zd)<J^1_\epsilon(\ze)$ and contradicts the local optimality of $\ze$ and proves \cref{eq:interiority_state}. Now, to prove \cref{eq:boundedness_lambda}, let us ensure that the left-hand side of this equation is well-defined. From \cref{eq:interiority_state} we have $(g(x[\ze]))^{-1}(\{0\}) =\emptyset$, thus
\begin{equation}
    [0,T]\ = \lim_{\rho \rightarrow 0}((g(x[\ze]))^{-1}((-\infty, \rho))
    \label{eq:density_S3_S2}
\end{equation}
Hence, using \cref{def:mui}, one has
\begin{equation}
    \norm{\psi'\circ g(x[\ze])}_{\xLone} =\int_0^T\psi' \circ g(x[\ze](t))\xdt:=\lim_{\rho\rightarrow 0}\int_{-\infty}^{-\rho}\psi'(s)m[\ze,g](\xd s)
\end{equation}
which is well-defined. Now, let us prove \cref{eq:boundedness_lambda} by contradiction and assume that
\begin{equation}
    \forall K_g>0, \exists \epsilon>0 \st \norm{\epsilon \psi'\circ g(x[\ze]) }_{\xLone}>K_g
\end{equation}
Then, from \cref{eq:def_set_a1,eq:def_set_a2,eq:density_S3_S2}, one has
\begin{equation}
\lim_{\rho \rightarrow 0}\epsilon\int_{-G_\delta}^{-\rho}\psi'(s)m[\ze,g](\xd s) > K_g - \epsilon \int_{-\infty}^{-G_\delta} \psi'(s) m[\ze,g](\xd s)> K_g -\frac{\epsilon_0 T}{G_\delta} \label{eq:upperbound_l1_gammaxprime}
\end{equation}
Gathering \cref{eq:upperbound_delta1,eq:upperbound_delta2,eq:upperbound_l1_gammaxprime} yields
\begin{equation}
   \Delta_1 + \epsilon \Delta_2 \leq 
    \const(\ell,f,g, \varphi,T, \epsilon_0,\psi, G_\delta, R_v, R_x, \bar{\sigma})
-G_\delta(1 - \bar{\sigma}) K_g
\end{equation}
Since $G_\delta(1 - \bar{\sigma})>0$, $\exists K_g>0$ such that $\Delta_1 + \epsilon \Delta_2<0$ which contradicts the optimality of $\ze$, proves \cref{eq:boundedness_lambda} and concludes the proof.
\end{proof}

\subsection{Mixed constraints interiority analysis}

\begin{lmm}
\label{thm:interior_mixed_const}
There exists a constant $K_c<+\infty$ such that for all $\epsilon>0$ and for any $(x[\ue, x^0_\epsilon], \ue)$ locally optimal solution of Problem \cref{eq:def_log_barrier_ocp} the following holds
\begin{align}
\norm{\epsilon \psi'\circ c_i(x[\ue, x^0_\epsilon], \ue)}_{\xLone} & \leq K_c, \;i=1,\dots,n_c \label{eq:bounded_mixed_penalty_derivative}
\end{align}
\end{lmm}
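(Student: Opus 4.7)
The plan is to adapt the contradiction argument of Lemma \ref{thm:interior_state} to the mixed constraint. As in that proof, it suffices to treat $n_c = 1$. I would fix $\epsilon > 0$ and a locally optimal $(\ue, x^0_\epsilon) \in \Vad_\infty$ of Problem \cref{eq:def_log_barrier_ocp}, then for $\delta > 0$ small enough invoke Proposition \ref{prop:maximal_distance} to obtain $(\vd, x^0_\delta) \in B_{\norm{.}_{\Vad}}((\ue, x^0_\epsilon), \delta) \cap \Vad$ satisfying, in addition to the analogous state-margin inequalities,
\[
c(x[\vd, x^0_\delta], \vd) \leq -2 C_\delta, \qquad c(x[\vd, x^0_\delta](t), \vd(t)) \leq c(x[\ue, x^0_\epsilon](t), \ue(t)) - C_\delta \text{ on } \supp^c_{\ue, x^0_\epsilon}(C_\delta).
\]
With $\ze := (\ue, x^0_\epsilon)$ and $\zd := (\vd, x^0_\delta)$, the cost gap decomposes as $J_\epsilon(\zd) - J_\epsilon(\ze) = \Delta_1 + \epsilon \Delta_2^g + \epsilon \Delta_2^c$, where $\Delta_1$ collects the non-penalty terms and $\Delta_2^g, \Delta_2^c$ the state and mixed penalty differences.

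By Proposition \ref{prop:major_Linf_L1}, $\Delta_1$ is bounded by a constant, exactly as in \cref{eq:upperbound_delta1}. For $\epsilon \Delta_2^g$, convexity of $\psi$ gives pointwise $\psi \circ g(x[\zd]) - \psi \circ g(x[\ze]) \leq \psi'(g(x[\zd]))(g(x[\zd]) - g(x[\ze]))$; since $g(x[\zd]) \leq -2 G_\delta$ implies $\psi'(g(x[\zd])) \leq 1/(2 G_\delta)$ and $x$ is uniformly bounded with $g$ Lipschitz on bounded sets, this quantity is bounded by $\const(G_\delta, g, R_v, R_x)$, whence $\epsilon \Delta_2^g \leq \const(T, G_\delta, g, R_v, R_x, \epsilon_0)$.

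For $\epsilon \Delta_2^c$, setting $c_\epsilon := c(x[\ze], \ue)$, I would introduce, for $\rho \in (0, C_\delta)$, the disjoint decomposition
\[
E_1^c := \{c_\epsilon \leq -C_\delta\}, \quad E_2^c(\rho) := \{-C_\delta < c_\epsilon \leq -\rho\}, \quad E_3^c(\rho) := \{c_\epsilon > -\rho\}.
\]
On $E_3^c(\rho) \subseteq \supp^c_{\ue, x^0_\epsilon}(C_\delta)$, the maximal-distance estimate yields $c(x[\zd], \vd) \leq c_\epsilon - C_\delta < c_\epsilon$, hence $\psi \circ c(x[\zd], \vd) - \psi \circ c_\epsilon \leq 0$ and this contribution is discarded. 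On $E_1^c$, both arguments of $\psi$ lie in a compact subset of $(-\infty, 0)$, giving a bounded contribution. On $E_2^c(\rho) \subseteq \supp^c_{\ue, x^0_\epsilon}(C_\delta)$, the mean-value plus intermediate-value computation of \cref{eq:majoS2,eq:mean_value_g,eq:lower_bound_intermediate}, repeated verbatim with $c$ in place of $g$ and $C_\delta$ in place of $G_\delta$, delivers
\[
\epsilon \Delta_2^c \leq \const - \epsilon C_\delta (1 - \bar{\sigma}) \int_{E_2^c(\rho)} \psi'(c_\epsilon(t)) \, \xd t.
\]

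The structural difference from Lemma \ref{thm:interior_state}, and the step I expect to be the main obstacle, is that Proposition \ref{prop:lower_bound_measure} is unavailable: the push-forward measure of $c(x[\ue], \ue)$ need not be absolutely continuous because $\ue$ is only in $\xLinfty$, not in $\xWn{1,\infty}$. I would bypass this by using the $\xLone$ contradiction hypothesis directly. Assume $\norm{\epsilon \psi' \circ c_\epsilon}_{\xLone} > K_c$ for arbitrarily large $K_c$. Since finiteness of $J_\epsilon(\ze)$ forces $c_\epsilon < 0$ almost everywhere, $E_2^c(\rho) \uparrow \{c_\epsilon \in (-C_\delta, 0)\}$ as $\rho \downarrow 0$; combining monotone convergence with the pointwise bound $\psi'(c_\epsilon) \leq 1/C_\delta$ on $E_1^c$ yields
\[
\lim_{\rho \downarrow 0} \int_{E_2^c(\rho)} \psi'(c_\epsilon) \, \xd t \geq \int_0^T \psi'(c_\epsilon) \, \xd t - \frac{T}{C_\delta} \geq \frac{K_c}{\epsilon} - \frac{T}{C_\delta}.
\]
Choosing $\rho$ small and combining with the bounds on $\Delta_1$ and $\epsilon \Delta_2^g$, one obtains $J_\epsilon(\zd) - J_\epsilon(\ze) \leq \const - C_\delta (1 - \bar{\sigma}) K_c + O(\epsilon_0)$, negative for $K_c$ large and contradicting the local optimality of $\ze$.
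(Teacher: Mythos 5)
Your proposal is correct and follows essentially the same route as the paper's proof: perturb to the strictly feasible competitor supplied by \cref{prop:maximal_distance}, split the cost gap into the Lipschitz part, the state-penalty part and the mixed-penalty part, bound the first two by constants, and run the mean-value/intermediate-value estimate on the near-active set against the assumed blow-up of $\norm{\epsilon\psi'\circ c(x[\ze],\ue)}_{\xLone}$ to contradict local optimality. Two minor remarks: your convexity bound $\psi'(g(x[\zd]))\leq 1/(2G_\delta)$ for the state-penalty term is a slightly cleaner alternative to the paper's mean-value argument based on $\norm{\epsilon\psi'\circ g(x[\zd])}_{\xLone}\leq 2K_g$; and the $E_1^c$ contribution should not be justified by compactness of the range of $c(x[\ze],\ue)$ (the controls are only uniformly $\xLone$-bounded, so that range is not uniformly compact in $\epsilon$) but rather, as in \cref{eq:upperbound_delta31}, by convexity of $\psi$ together with $\psi'(-C_\delta)<\infty$ and the uniform bound on $\norm{\Delta c}_{\xLone}$.
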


\begin{proof}
It is sufficient to prove the case where $n_c=1$, i.e., when there is a single mixed constraint. From \cref{prop:maximal_distance}, $\forall \delta >0, \exists (\vd, \xdelta^0)\in B_{\norm{.}_{\Vad}}((\ue, x^0_\epsilon),\delta) \cap \Vad$ and $C_\delta >0$ such that
\begin{subequations}
    \begin{align}
        c(x[\vd, \xdelta^0](t), \vd(t))&\leq -2 C_\delta,\; \textrm{ a.a. }t\in[0,T]\\
         c(x[\vd, \xdelta^0](t), \vd(t))&\leq c(x[\ue, x^0_\epsilon](t), \ue(t))-C_\delta,\; \forall t\in \supp^c_{\ue, x^0_\epsilon}(C_\delta) \label{eq:inequality_sur_supp_c_ue}
    \end{align}
\end{subequations}
with $\supp^c_{\ue, x^0_\epsilon}(C_\delta)\neq \emptyset$. In the following, to alleviate the notations, we denote 
\begin{align}
    \zd &:= (\vd, \xdelta^0)\\
    \ze &:= (\ue, x^0_\epsilon)\\
    \Delta z &:= \zd - \ze\\
    \Delta x &:= x[\zd] - x[\ze]\\
    \Delta g &:= g(x[\zd]) - g(x[\ze])\\
    \Delta c &: = c(x[\zd],\vd) - c(x[\ze], \ue)
\end{align}
In addition, From \cref{thm:interior_state}, and by continuity of the mapping $z\mapsto x[z]$ one can chose $\delta>0$ such that the following holds
\begin{align}
    \sup_t g(x[\zd](t))& < 0\label{eq:interiority_state_mixed}\\
    \norm{\epsilon \psi' \circ g(x[\zd])}_{\xLone} &\leq 2\norm{\epsilon \psi' \circ g(x[\ze])}_{\xLone}\leq 2K_g\label{eq:upper_bound_state_mixed}
\end{align}
Now, one can exhibit an upper-bound on the difference $J_\epsilon(x[\zd], \vd)-J_\epsilon(x[\ze], \ue)$ as follows
\begin{equation}
    J_\epsilon(x[\zd], \vd)-J_\epsilon(x[\ze], \ue) = \Delta_1  + \Delta_2 + \epsilon \Delta_3\label{eq:def_split_c_epsilon}
\end{equation}
where
\begin{align}
    \Delta_1 &:=
        \varphi(x[\zd](T))-\varphi(x[\ze](T))  + \int_0^T\left[\ell(x[\zd](t),\vd(t))-\ell(x[\ze](t),\ue(t))\right]\xdt\label{eq:def_delta_1_c}\\
    \Delta_2 & := \epsilon\int_0^T \sum_i\left[\psi \circ g_i(x[\zd](t))-\psi \circ g_i(x[\ze](t)) \right]\xdt\label{eq:def_delta_2_c}\\
     \Delta_3 &:= \int_0^T\left[\psi\circ c(x[\zd](t), \vd(t))-\psi \circ c(x[\ze](t), \ue(t))\right]\xdt\label{eq:def_delta_3_c}
\end{align}
Now, let us upper-bound $\Delta_1$
\begin{multline}
    \Delta_1 \leq 
    \int_0^T\const(\ell)\left(\norm{x[z](t)-x[\ze](t)}+\norm{v(t)-\ue(t)}\right)\xdt
    +\const(\varphi)\norm{\Delta x}_{\xLinfty}
   \\ \leq \const(\ell,f,g, \varphi, T, R_v, R_x)\label{eq:upperbound_delta1_c}
\end{multline}
Now, let us upper-bound $\Delta_2$.
\begin{equation}
    \Delta_2 = \epsilon\int_0^T \sum_i\left[\psi \circ g_i(x[\zd](t))-\psi \circ g_i(x[\ze](t)) \right]\xdt = \epsilon\sum_i \int_0^T\int_0^1\psi'\left[g_i(x[\ze](t)) + s \Delta g_i(t)\right]\Delta g_i(t) \xd s \xdt 
\end{equation}
From the mean value theorem, \cref{eq:interiority_state_mixed,eq:upper_bound_state_mixed}, $\exists \theta_t\in[0,1]$ such that
\begin{equation}
    \Delta_2 = \epsilon\sum_i \int_0^T\psi'\circ g_i(x[\ze](t) + \theta_t \Delta g(t))\Delta g(t) \xdt \leq \sum_i 2 K_g \norm{\Delta g}_{\xLinfty}
    \leq \const(f,g,K_g, R_x,T)\label{eq:upperbound_delta2_c}
\end{equation}
Now, let us upper-bound $\Delta_3$ defined in \cref{eq:def_delta_3_c}. To do so, let us introduce the following useful subsets of $[0,T]$
 \begin{align}
    E_1&:=(c(x[\ze], \ue))^{-1}\left((-\infty,-C_\delta]\right)\label{eq:def_set_a1_c}\\
    E_2&:=(c(x[\ze], \ue))^{-1}\left((-C_\delta,0]\right)\label{eq:def_set_a2_c}
\end{align}
Let us decompose $\Delta_3$ as follows $\Delta_3 := \Delta_{3,1} + \Delta_{3,2}$, with
\begin{equation}
    \Delta_{3,i} := \int_{E_i} \psi \circ c(x[\zd](t), \vd(t)) - \psi \circ c(x[\ze](t), \ue(t))\xdt,\;i=1,2
\end{equation}
By convexity of the $\log$ penalty, i.e. $\psi$, we have
\begin{equation}
    \label{eq:upperbound_delta31}
    \Delta_{3,1}\leq\int_{E_1} \psi'(C_\delta) \norm{\Delta c}_{\xLinfty} \xdt \leq 
    \const(T,f,c,\psi,C_\delta)\norm{\zd - \ze}_{\Vad}
    \leq \const(T, f, c, \psi, C_\delta, R_v )
\end{equation}
In addition, we have 
\begin{equation}
     \Delta_{3,2} =
     \int_{E_2}\left(\int_{0}^{1}\psi'(c(x[\ze](t), \ue(t)) + s\Delta c(t))\Delta c(t) \xd s\right) \xdt
\end{equation}
Since $\forall t\in E_2, \,\Delta c(t)  < -C_\delta$, we also have
\begin{equation}
    \label{eq:majoS2_c}
    \Delta_{3,2} \leq -C_\delta\int_{E_2}\left(\int_{0}^{1}\psi'(c(x[\ze](t), \ue(t)) + s\Delta c(t))\xd s\right) \xdt
\end{equation}
From the mean value theorem, $\forall t\in E_2$, $\exists \sigma_t\in[0,1]$ such that
\begin{equation}
\label{eq:mean_value_c}
    \psi'\left(c(x[\ze](t), \ue(t)) + \sigma_t \Delta c(t)\right)= \int_{0}^{1} \psi'(c(x[\ze](t), \ue(t)) + s\Delta c(t))\xd s
\end{equation}
Since for all $t\in E_2$, we have $c(x[\ze)(t), \ue(t)) - c(x[\zd](t), \vd(t))\geq C_\delta$ and since $\psi'$ is strictly increasing we have $\sigma_t \in (0,1)$ and 
\begin{equation}
    \psi'\circ c(x[\zd](t), \vd(t)) < \psi'(c(x[\ze](t), \ue(t)) + \sigma_t \Delta c(t)) <\psi'\circ c(x[\ze](t), \ue(t))
\end{equation}
From the intermediate value theorem, $\exists \bar{\sigma}\in(0,1)$ such that $\forall t\in E_2$ we have
\begin{equation}
\label{eq:lower_bound_intermediate_c}
    \psi'(c(x[\ze](t), \ue(t)) + \sigma_t \Delta c(t)) \geq (1 - \bar{\sigma})\psi'\circ c(x[\ze](t), \ue(t)) + \bar{\sigma} \psi'\circ c(x[\zd](t), \vd(t))
\end{equation}
Gathering \cref{eq:majoS2_c,eq:mean_value_c,eq:lower_bound_intermediate_c} yields
\begin{align}
    \Delta_{3,2} &\leq
    - C_\delta \int_{E_2}\bigg( (1-\bar{\sigma})\psi' \circ c(x[\ze](t), \ue(t)) + \bar{\sigma} \psi' \circ c(x[\zd](t), \vd(t)) \bigg)\xdt\\
    &\leq - C_\delta\left( (1 - \bar{\sigma})\int_{E_2}\psi' \circ c(x[\ze](t), \ue(t))\xdt + \const(\bar{\sigma}, \psi, T,c, C_\delta)\right)\label{eq:upperbound_delta32}
\end{align}
Gathering \cref{eq:upperbound_delta31,eq:upperbound_delta32} we have 
\begin{equation}
    \label{eq:upperbound_delta3_c}
    \epsilon \Delta_3 \leq \const(T,f,c,C_\delta,\epsilon_0, \psi, \bar{\sigma},R_v) -\epsilon C_\delta(1 - \bar{\sigma})\int_{E_2}\psi' \circ c(x[\ze](t), \ue(t))\xdt
\end{equation}
Gathering \cref{eq:upperbound_delta1_c,eq:upperbound_delta2_c,eq:upperbound_delta3_c} yields
\begin{multline}
    J_\epsilon(\zd) - J_\epsilon(\ze) = \Delta_1 + \Delta_2 + \epsilon \Delta_3  \leq\\
    \const(\ell,f,g, \varphi, T, K_g,R_v, R_x, C_\delta,c,\epsilon_0, \psi, \bar{\sigma})
    -\epsilon C_\delta(1 - \bar{\sigma})\int_{E_2}\psi' \circ c(x[\ze](t), \ue(t))\xdt
    \label{eq:upperbound_J2}
\end{multline}
Now let us prove \cref{eq:bounded_mixed_penalty_derivative} by contradiction and assume that
\begin{equation}
    \forall K_c>0, \exists \epsilon>0 \st \norm{\epsilon \psi'\circ c(x[\ze], \ue) }_{\xLone}>K_c
\end{equation}
From the definition of $E_1$ and $E_2$, we have
\begin{equation}
    \norm{\epsilon \psi'(c(x[\ze], \ue)}_{\xLone} = 
        \int_{E_1}\epsilon\psi'(c(x[\ze](t), \ue(t))\xdt +
        \int_{ E_2}\epsilon \psi'(c(x[\ze](t), \ue(t))\xdt
\end{equation}
which, in turns yields
\begin{equation}
    \int_{ E_2}\epsilon \psi'(c(x[\ze](t), \ue(t))\xdt > K_c - \epsilon_0\psi'(C_\delta)T
    \label{eq:eq_majo_int_e2_rho_c}
\end{equation}
gathering \cref{eq:upperbound_J2,eq:eq_majo_int_e2_rho_c} yields
\begin{equation}
    J_\epsilon(\zd) - J_\epsilon(\ze) \leq\\
    \const(\ell,f,g, \varphi, T, K_g,R_v, R_x,C_\delta,c,\epsilon_0, \psi, \bar{\sigma})- C_\delta(1 - \bar{\sigma})K_c
\end{equation}
For $K_c$ large enough, $J_\epsilon(\zd) - J_\epsilon(\ze)<0$, which contradicts the optimality of $(x[\ze], \ue)$, proves \cref{eq:bounded_mixed_penalty_derivative} and concludes the proof.
\end{proof}

In addition, using \cref{thm:interior_state} and \cref{thm:interior_mixed_const}, one can also prove a uniform boundedness property for the adjoint state $\bpe$ from \cref{def:penalized_pontryagin_extremal}.
\begin{crllr}
\label{prop:well_posed_p}
Let $(\bue, \bxe)$ be a locally optimal solution of Problem \cref{eq:def_log_barrier_ocp} and let $(\bpe, \bar{\lambda}_\epsilon)$ be the corresponding constraint multipliers, then there exists $K_p<\infty$ such that $\norm{\bpe}_{\xLinfty}\leq K_p$
\end{crllr}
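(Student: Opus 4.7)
The approach is to view \cref{eq:penalized_pontryagin_extremal_2} as a linear inhomogeneous ODE for $\bpe$ and to close a Gronwall estimate, once every source term has been shown to be $\xLone$-bounded uniformly in $\epsilon$. Expanding the penalized pre-Hamiltonian yields
\begin{equation*}
-\dot{\bpe}(t) = f_x'(\bxe,\bue)^\top \bpe + \ell_x'(\bxe,\bue) + \epsilon \sum_{i=1}^{n_g} \psi'(g_i(\bxe))\, g_i'(\bxe) + \epsilon \sum_{i=1}^{n_c} \psi'(c_i(\bxe,\bue))\, c_{i,x}'(\bxe,\bue).
\end{equation*}
Because $f, \ell, c$ are affine in $u$ and $\bxe$ is confined to a fixed ball by \cref{ass:bounded_control_set}, the coefficients $f_x', \ell_x', c_{i,x}'$ evaluated along $(\bxe,\bue)$ each split as a bounded function of $\bxe$ plus a bounded function of $\bxe$ times $\bue$, so their $\xLone$-norms are controlled by a constant depending only on $R_v$ and the data.

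The state-penalty source $\epsilon \psi'(g_i(\bxe))\, g_i'(\bxe)$ is uniformly $\xLone$-bounded thanks to \cref{thm:interior_state}. The mixed-penalty source is the delicate one, because $c_{i,x}'(\bxe,\bue) = a_{i,x}'(\bxe)\bue + b_{i,x}'(\bxe)$ still carries a factor $\bue$ while \cref{thm:interior_mixed_const} provides only an $\xLone$ bound on $\epsilon\psi'(c_i)$. The remedy is the stationarity equation \cref{eq:penalized_pontryagin_extremal_3}, which in the affine setting reads
\begin{equation*}
\ell_2(\bxe)^\top + f_2(\bxe)^\top \bpe + \sum_{i=1}^{n_c} \epsilon \psi'(c_i(\bxe,\bue))\, a_i(\bxe)^\top = 0.
\end{equation*}
Multipliers $\epsilon\psi'(c_i)$ associated to indices with $c_i(\bxe,\bue) < -1/n$ are pointwise bounded by $\epsilon n$, and \cref{ass:qualification_condition_mixed_constraint} allows one to invert the restriction of the above system to the near-saturated index set, yielding the pointwise estimate $\epsilon\,\norm{\psi'(c_i(\bxe(t),\bue(t)))} \leq C(1 + \norm{\bpe(t)})$ for a.a. $t$, with $C$ independent of $\epsilon$. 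Substituting back into the adjoint ODE produces a differential inequality $\norm{\dot{\bpe}(t)} \leq C_1(t) + C_2(t)\norm{\bpe(t)}$ with $C_1, C_2 \in \xLone([0,T])$ uniformly in $\epsilon$.

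It remains to bound $\bpe(T)$. The variation-of-constants formula combined with the transversality conditions \cref{eq:penalized_pontryagin_extremal_5,eq:penalized_pontryagin_extremal_6} reduces the boundary problem to a linear system $M_\epsilon \bar{\lambda}_\epsilon = r_\epsilon$, where $M_\epsilon$ is the shooting Jacobian built from the resolvent of the linearised adjoint dynamics and the Jacobians of $h$, and $r_\epsilon$ is bounded uniformly in $\epsilon$ by the preceding estimates. \cref{ass:robinson_qualification} is precisely the statement that $M_\epsilon$ is uniformly left-invertible along the family of penalized stationary points, which gives $\norm{\bar{\lambda}_\epsilon} \leq K_\lambda$ and hence $\norm{\bpe(T)} \leq K_T$. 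Gronwall's lemma applied to the above differential inequality then delivers $\norm{\bpe}_{\xLinfty} \leq K_p$.

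The main obstacle is the second step: the raw $\xLone$ bound on $\epsilon\psi'(c_i)$ from \cref{thm:interior_mixed_const} is multiplied in the adjoint equation by the $\xLone$-only factor $\bue$, and the product of two $\xLone$ functions need not be $\xLone$. Trading this penalty against $\bpe$ via the first-order condition \cref{eq:penalized_pontryagin_extremal_3} and the qualification \cref{ass:qualification_condition_mixed_constraint} is the crucial reduction; the remaining Gronwall estimate and the invocation of \cref{ass:robinson_qualification} are then routine.
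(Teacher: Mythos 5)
Your proposal is correct in outline and shares the paper's skeleton --- integrate the adjoint equation \cref{eq:penalized_pontryagin_extremal_2}, bound the penalty source terms, bound $\bpe(T)$ via \cref{eq:penalized_pontryagin_extremal_6}, and close with Gr\"onwall --- but it diverges on exactly the two points where the paper is terse. For the mixed-penalty source, the paper simply pairs the uniform $\xLone$ bound of \cref{thm:interior_mixed_const} with an $\xLinfty$ bound on $c'_{i,x}(\bxe,\bue)$, implicitly using that admissible controls lie in a fixed $\xLinfty$ ball (the $R_u$ appearing in \cref{def:mui} and reused in \cref{lem:boundedness_ci}); you instead extract the pointwise bound $\epsilon\,\psi'\circ c_i \leq C(1+\norm{\bpe(t)})$ from the stationarity condition \cref{eq:penalized_pontryagin_extremal_3} together with \cref{ass:qualification_condition_mixed_constraint}. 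That detour is sound --- it is essentially the mechanism the paper deploys later in \cref{prop:mixed_pen_in_Linfty} --- and it buys robustness (only $\norm{\bue}_{\xLone}\leq R_v$ is needed, and the $\bpe$-dependence folds harmlessly into the $\xLone$ Gr\"onwall coefficient), at the price of machinery that is unnecessary under the paper's standing boundedness convention. For the terminal value, the paper asserts without argument that the right-hand side of \cref{eq:penalized_pontryagin_extremal_6}, hence $\bar{\lambda}_\epsilon$, is bounded; your shooting reduction is more explicit, but note that \cref{ass:robinson_qualification} is a statement about singular multipliers of the original problem \cref{eq:all_orig_problem}, not literally a uniform left-invertibility of your matrix $M_\epsilon$ along the penalized family, and the inhomogeneity $r_\epsilon$ inherits a $\bpe$-dependence from your pointwise multiplier bound, so the reduction to a clean linear system is not immediate; passing from the assumption to the uniform bound would require a compactness or contradiction argument that neither you nor the paper supplies. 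In sum: same architecture, a genuinely different and more self-contained handling of the mixed-constraint term, and an equally incomplete (though more candid) treatment of the multiplier bound.
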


\begin{proof}
First, using \cref{eq:penalized_pontryagin_extremal_2} one has
\begin{multline}
    \norm{\bpe(T)- \bpe(s)} 
    \leq \int_s^T\left(\norm{\ell_x'(\bxe,\bue)}_{\xLinfty}+\norm{f_x'(\bxe,\bue)}_{\xLinfty}\norm{\bpe(t)}\right)\xdt + \sum_i\norm{g_i'(\bxe)}_{\xLinfty}\norm{\epsilon \psi'\circ g_i(\bxe)}_{\xLone}\\
    +\sum_i\norm{c_{i,x}'(\bxe, \bue)}_{\xLinfty}\norm{\epsilon \psi'\circ c_i(\bxe,\bue)}_{\xLone}
\end{multline}
From the continuity of $\ell_x'$, $f_x'$, $g_i'$, $c'_{i,x}$ and since $\bxe$ and $\bue$ are bounded, we have $\norm{\ell_x'(\bxe,\bue)}_{\xLinfty}<\const(\ell,f)$ and  $\norm{f_x'(\bxe,\bue)}_{\xLinfty}<\const(f)$. In addition, the terms on the right-hand side of \cref{eq:penalized_pontryagin_extremal_6} are bounded which yields that $\norm{\bpe(T)}\leq\const(f,h)$. The derivatives of the penalty functions being uniformly $\xLone$-bounded one can use Grönwall Lemma which proves that $\exists K_p<+\infty$ such that $\forall s\in[0,T]$ we have $\norm{\bpe(s)}\leq K_p$ which concludes the proof.
\end{proof}

\begin{lmm}
\label{lem:boundedness_ci}
    There exists a constant $K_c < +\infty$ such that for all $\epsilon>0$, any $(\bxe, \bue)$ locally optimal solution of Problem \cref{eq:def_log_barrier_ocp} satisfies
    \begin{equation}
        c(\bxe(t), \bue(t))\leq - \epsilon/K_c,\textrm{ a.e. }
    \end{equation}
\end{lmm}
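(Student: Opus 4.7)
The strategy is to read off a pointwise upper bound on $\bne_i(t) := \epsilon \psi' \circ c_i(\bxe(t), \bue(t))$ directly from the penalized stationarity condition \cref{eq:penalized_pontryagin_extremal_3} and the constraint qualification \cref{ass:qualification_condition_mixed_constraint}, and then translate it to the desired inequality via $\psi'(x) = -1/x$.

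First I would exploit the affine structure. Writing $\ell(x,u) = \ell_1(x) + \ell_2(x).u$, $f(x,u) = f_1(x)+f_2(x).u$ and $c(x,u) = a(x).u + b(x)$, one has $H_u'(x,u,p) = \ell_2(x)^\top + f_2(x)^\top p$ (independent of $u$) and $c_{i,u}'(x,u) = a_i(x)^\top$. Consequently \cref{eq:penalized_pontryagin_extremal_3} reads
\begin{equation*}
\ell_2(\bxe(t))^\top + f_2(\bxe(t))^\top \bpe(t) + \sum_{i=1}^{n_c} \bne_i(t)\, a_i(\bxe(t))^\top = 0 \quad \textrm{a.e.}
\end{equation*}
By \cref{ass:bounded_control_set,ass:bounded_if_state_set}, $\bxe$ is bounded uniformly in $\epsilon$, and by \cref{prop:well_posed_p}, $\|\bpe\|_{\xLinfty}\leq K_p$. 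Since $\ell_2$, $f_2$, $a_i$ are continuous, the first two terms above are bounded by some constant $M_1$ independent of $\epsilon$ and $t$.

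Next I would split the sum using the qualification set $I^c_{\bue,\bxe^0}(t,n)$ with the $n$ provided by \cref{ass:qualification_condition_mixed_constraint}. For $i \notin I^c_{\bue,\bxe^0}(t,n)$ one has $c_i(\bxe(t),\bue(t)) < -1/n$, hence
\begin{equation*}
\bne_i(t) = -\frac{\epsilon}{c_i(\bxe(t),\bue(t))} \leq \epsilon_0\, n =: M_2.
\end{equation*}
Rearranging and using continuity of $a$, the vector $\xi(t) \in \R^{|I^c_{\bue,\bxe^0}(t,n)|}$ with components $\bne_i(t)$, $i \in I^c_{\bue,\bxe^0}(t,n)$, satisfies
\begin{equation*}
\bigl\|c'_{I^c_{\bue,\bxe^0}(t,n), u}(\bxe(t),\bue(t))^\top . \xi(t)\bigr\| \leq M_1 + n_c M_2 \|a\|_{\xLinfty} =: M \quad \textrm{a.e.}
\end{equation*}
Applying the qualification estimate then yields $\|\xi(t)\| \leq M/\gamma$ a.e., hence $\bne_i(t) \leq M/\gamma$ for every near-saturated index $i$.

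Combining the two cases, setting $K_c := \max(M_2, M/\gamma)$, we obtain $0 \leq \bne_i(t) \leq K_c$ a.e.\ and for every $i = 1, \dots, n_c$. Since $\bne_i(t) = -\epsilon/c_i(\bxe(t),\bue(t))$ and $c_i(\bxe(t),\bue(t)) < 0$, this is equivalent to
\begin{equation*}
c_i(\bxe(t),\bue(t)) \leq -\epsilon/K_c \quad \textrm{a.e.},\; i=1,\dots,n_c,
\end{equation*}
which is the claimed bound. The only delicate point is the independence of all constants of $\epsilon$ and of $\bue$; the affine structure of $\ell$, $f$ and $c$ is precisely what makes $H_u'$ and $c_{i,u}'$ free of $\bue$, so that the $\xLinfty$-bounds on $\bxe$ and $\bpe$ suffice, even though no a priori $\xLinfty$-bound on $\bue$ is available.
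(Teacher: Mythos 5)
Your proof is correct, but it follows a genuinely different route from the paper's. The paper argues variationally: assuming some $c_i(\bxe(t),\bue(t))\geq-\epsilon/K_c$ on a set of positive measure, it builds an explicit control perturbation $v(t)=-C(t)^\top[C(t)C(t)^\top]^{-1}e_v$ that pushes each near-active constraint from $c_i$ to $c_i-\epsilon/K_c\leq 2c_i$, so the logarithmic penalty drops by at least $\epsilon\log 2$ per active index while the change in $H$ is only $O(\epsilon/K_c)$ (using \cref{prop:well_posed_p}); for $K_c$ large this contradicts the pointwise minimality of $\bue$ in the penalized pre-Hamiltonian. You instead read the multipliers $\bne_i=\epsilon\psi'\circ c_i$ off the stationarity equation \cref{eq:penalized_pontryagin_extremal_3} and bound them via the constraint qualification: since $H_u'$ is independent of $u$ by affineness, the uniform bounds on $\bxe$ and $\bpe$ control the right-hand side, and \cref{ass:qualification_condition_mixed_constraint} inverts the map $\xi\mapsto\sum_{i\in I}\xi_i a_i(\bxe)$ on the near-saturated block. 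Your route is more direct and buys more: it delivers the uniform pointwise bound $\eta_{\epsilon,i}\leq K_c$ in one stroke, i.e.\ it subsumes both \cref{lem:boundedness_ci} and the $\xLinfty$-membership of \cref{prop:mixed_pen_in_Linfty}, which the paper establishes separately by a duality argument on $\xLone$. The paper's route, by contrast, leans only on optimality and the specific $\log 2$ gain of the logarithmic barrier, not on the explicit multiplier representation. Two shared caveats, not specific to your argument: your constant $M_2=\epsilon_0 n$ (and hence $K_c$) presupposes $\epsilon\leq\epsilon_0$, consistent with the paper's standing restriction even though the lemma says ``for all $\epsilon>0$''; and both proofs need the qualification constants $\gamma$ and $n$ of \cref{ass:qualification_condition_mixed_constraint} to be uniform over $\Vad$ rather than depending on $(u,x^0)$ --- the paper's own use of $[C(t)C(t)^\top]^{-1}$ requires the same reading of the quantifiers.
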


\begin{proof}
    For all $K_c>0$, assume that there exists $E\subseteq[0,T]$ of strictly positive measure such that $I^c_{\bue,\bxe(0)}(t,K_c/\epsilon )\neq\emptyset$ for all $t\in E$. Now, let us denote $C(t):= a_{I^c_{\bue,\bxe(0)}(t,K_c/\epsilon )}(\bxe)$ and let us define $v$ as follows
     \begin{equation}
        v(t) := \begin{cases}
            - C(t)^\top\left[C(t) C(t)^\top\right]^{-1}.e_v,\forall t\in E\\
            0\textrm{ otherwise}
        \end{cases}
    \end{equation}
   where $\R^{\vert I^c_{\bue,\bxe(0)}(t,K_c/\epsilon )\vert} \ni e_v :=\begin{pmatrix}\epsilon/K_c &\dots & \epsilon/K_c\end{pmatrix}^\top$. Since $\bue$ is a locally optimal solution, we have for almost all $t\in E$
    \begin{multline}
        H^\psi(\bxe(t), \bue(t)+ v(t), \bpe(t), \epsilon) - H^\psi(\bxe(t), \bue(t) , \bpe(t), \epsilon) = 
        H(\bxe(t), \bue(t)+ v(t),\bpe(t)) - H(\bxe(t), \bue(t) ,\bpe(t))\\
        + \epsilon\sum_{i\in I^c_{\bue,\bxe(0)}(t,K_c/\epsilon)} \log\left(\frac{c_i(\bxe(t), \bue(t))}{c_i(\bxe(t), \bue(t) + v(t))}\right)
    \end{multline}
    From the mean value theorem and the definition of $v$, $\exists s\in[0,1]$ such that 
    \begin{multline}
        H^\psi(\bxe(t), \bue(t)+ v(t), \bpe(t), \epsilon) - H^\psi(\bxe(t), \bue(t) , \bpe(t), \epsilon) \leq \\
        H'_{u}(\bxe(t), \bue(t) + s v(t),\bpe(t)). v(t)  -\epsilon\sum_{i\in I^c_{\bue,\bxe(0)}(t,K_c/\epsilon)} \log(2)
    \end{multline}
    Since $\ell, f$ are at least $\xCtwo$ and  from \cref{prop:well_posed_p} we have
    \begin{multline}
     H^\psi(\bxe(t), \bue(t)+ v(t), \bpe(t), \epsilon) - H^\psi(\bxe(t), \bue(t) , \bpe(t), \epsilon) \leq\\
     \epsilon\left(\frac{\const(\ell, f, K_p, T, R_u)}{K_c}-\sum_{i\in I^c_{\bue,\bxe(0)}(t,K_c/\epsilon)} \log(2) \right)
    \end{multline}
    which is negative for $K_c$ large enough and contradicts the local optimality of $\bue$ and proves the result.
\end{proof}

\section{Convergence of interior point methods in optimal control with logarithmic penalty functions}
\label{sec:convergence}
\subsection{Convergence of state variable and initial-final conditions}
\label{sec:proof_control_state_convergence}
Let us denote $(\bxen, \buen)_n$ a sequence of locally optimal solutions of \cref{eq:def_log_barrier_ocp}. The associated sequence $(\buen, \bxen(0))_n$ being $\xLtwo \times\R^{n}$-bounded, it contains a weakly converging subsequence $(\buenk,\bx_{\epsilon_{n_k}}(0))_k$ satisfying i.e.
\begin{align}
    \lim_{k \rightarrow +\infty} \buenk &\rightharpoonup \bu \\
    \lim_{k \rightarrow +\infty}\norm{\bx_{\epsilon_{n_k}}(0) - \bx^0}&=0
\end{align}
From \cref{lemme:weak_strong_l2_conv}, we also have 
\begin{align}
    \lim_{k\rightarrow +\infty} \norm{x[\buenk, \bx_{\epsilon_{n_k}}(0)] - x[\bu, \bx^0]}_{\xLinfty} &= 0\\
    \lim_{k\rightarrow +\infty}\norm{h\left(x[\buenk, \bx_{\epsilon_{n_k}}(0)](0), x[\buenk, \bx_{\epsilon_{n_k}}(0)](T)\right) - h\left(x[\bu, \bx^0](0),x[\bu, \bx^0])(T)\right)}&=0
\end{align}
which proves \cref{eq:conv_x_Linfty,eq:conv_if}.

\subsection{Convergence of initial-final constraints multipliers}

Let $(\lambda_{\epsilon_n})_n$ be the sequence of multipliers associated with the initial-final constraints \cref{eq:penalized_pontryagin_extremal_4}. These multipliers being bounded there exists a converging subsequence to some $\bar{\lambda}$, which writes $\lim_{k\rightarrow + \infty}\norm{ \bar{\lambda}_{\epsilon_{n_k}} - \bar{\lambda}} = 0$.

\subsection{Convergence of state penalties}
\label{sec:proof_convergence_state_penalty_derivatives}
In this paragraph, we prove that the derivative of the state-constraint penalty converges to a Radon measure $\bar{\mu}\in\mathcal{M}([0,T])^{n_g}$. To do so, let us denote
\begin{equation}
    \xLone([0,T];\R^+)\ni\theta_{\epsilon_n}^{g_i} :=\epsilon_n \psi'\circ g_i(\bxen) = -\frac{\epsilon_n}{g_i(\bxen)}\label{eq:def_lambda_g_mu}
\end{equation}
Identifying any element of the sequence $(\theta_{\epsilon_n}^{g_i})_n$ with a linear form on continuous functions $\Theta_{\epsilon_n}^{g_i}\in \mathcal{M}([0,T])$ defined as follows $\Theta_{\epsilon_n}^{g_i}:v\in \xCzero([0,T],\R) \mapsto \int_0^Tv(t)\theta^{g_i}_{\epsilon_n}(t)\xdt$. From \cref{thm:interior_state,eq:boundedness_lambda} we have $\vert \Theta_{\epsilon_{n_k}}^{g_i}(v)\vert \leq K_g\norm{v}_{\xLinfty} $, thus 
$$\forall \epsilon_n, \Theta_{\epsilon_n}^{g_i}\in B_{\mathcal{M}([0,T])}(0,K_g)$$ From the weak $\ast$ compactness of the unit ball of $\mathcal{M}([0,T])$ (see \cite[Theorem 3.16]{brezis}), there exists a subsequence $\left(\Theta_{\epsilon_{n_k}}^{g_i}\right)_{k\in\mathbb{N}}$ and a measure $\bar{\mu}_i\in \mathcal{M}([0,T])$ with $\bar{\mu}_i(T)=0$ such that $\lim_{k\rightarrow +\infty}\Theta_{\epsilon_{n_k}}^{g_i}\stackrel{\ast}{\rightharpoonup}\bar{\mu}_i,\;\;i=1,\dots,n_g$ which proves \cref{eq:weak_star_conv_pen_state}. Now, let us prove that $\bar{\mu}$ satisfies conditions \cref{eq:first_order_7,eq:first_order_11}. From \cref{thm:interior_state,eq:interiority_state} and from \cref{eq:def_lambda_g_mu}, we have $\theta_{\epsilon_{n_k}}^{g_i}>0$, $\forall t\in[0,T]$ and $\forall \epsilon_{n_k}>0$. Therefore, $\forall \phi\in \xCzero([0,T];\R^+)$ one has $\int \phi \xd \bar{\mu}_i = \lim_k \int\phi \theta_{\epsilon_{n_k}}^{g_i}\xdt\geq0$, which proves \cref{eq:first_order_11}. Finally, let us prove that $\bar{\mu}$ satisfies the complementarity condition \cref{eq:first_order_7}. From \cref{eq:def_lambda_g_mu}, we have $g_i(\bxenk(t))\theta_{\epsilon_{n_k}}^{g_i}(t) = - \epsilon_{n_k}$ hence
\begin{equation}
\label{eq:conv_gin_lambdan_to_zero}
    \lim_{k\rightarrow+\infty}  \int_0^Tg_i(\bxenk(t))\theta_{\epsilon_{n_k}}^{g_i}(t)\xdt = \lim_{k\rightarrow+\infty}-\epsilon_{n_k} T=0
\end{equation}
From the continuity of $g_i$, the sequence $(g_i(\bxenk))_k$ uniformly converges to $g_i(\bx)$. In addition, from \cref{thm:interior_state,eq:boundedness_lambda}, the sequence $\theta_{\epsilon_{n_k}}^{g_i}(t)$ is uniformly $\xLone$-bounded, hence
\begin{equation}
\label{eq:conv_gistar_gicsue}
   \lim_{k\rightarrow +\infty}\left\vert\int_0^T\left(g_i(\bx(t))-g_i(\bxenk(t))\right)\theta_{\epsilon_{n_k}}^{g_i}(t) \xdt \right\vert \leq
   \lim_{k\rightarrow +\infty}\norm{g_i(\bx)-g_i(\bxenk)}_{\xLinfty}\norm{\theta_{\epsilon_{n_k}}^{g_i}}_{\xLone}=0
\end{equation}
Gathering \cref{eq:conv_gin_lambdan_to_zero,eq:conv_gistar_gicsue} yields
\begin{equation}
    \lim_{k\rightarrow +\infty}\int_0^Tg_i(\bx(t))\theta_{\epsilon_{n_k}}^{g_i}(t)\xdt=  \lim_{k\rightarrow +\infty}\int_0^Tg_i(\bxenk(t))\theta_{\epsilon_{n_k}}^{g_i}(t)\xdt = 0
\end{equation}
which in turns gives
\begin{equation}
    \int_0^T g_i(\bx(t))\xd \bar{\mu}_i(t)  = \lim_{k\rightarrow +\infty}\int_0^Tg_i(\bx(t))\theta_{\epsilon_{n_k}}^{g_i}(t) \xdt
     =0
\end{equation}
and proves that $\bar{\mu}$ satisfies the complementarity condition \cref{eq:first_order_7}.
\subsection{Convergence of mixed-constraint penalties}
\label{sec:proof_convergence_mixed_penalty_derivatives}
In this paragraph, we prove that the derivative of the mixed-constraint penalty converges to an assentialyy bounded function $\bar{\nu}\in\xLinfty([0,T];\R_+^{n_g})$. To do so, let us denote
\begin{equation}
     \eta_{\epsilon_n,i} := \epsilon_n \psi'\circ c_i(\bxen, \buen)\label{eq:def_eta_c_nu}
\end{equation}
\begin{prpstn}
\label{prop:mixed_pen_in_Linfty}
 Let $(\bxe, \bue)$ be a locally optimal solution of \cref{eq:def_log_barrier_ocp}, then the following holds
 \begin{equation}
     \eta_{\epsilon_n} \in \xLinfty([0,T];\R^{n_c}_+)
 \end{equation}
 where $\eta_{\epsilon_n,i}:=\begin{pmatrix}\eta_{\epsilon_n,1} &\dots & \eta_{\epsilon_n,n_c} \end{pmatrix}^\top$
\end{prpstn}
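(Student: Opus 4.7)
The plan is to leverage Lemma \ref{lem:boundedness_ci} directly, which gives the essential lower bound $-c_i(\bxen(t), \buen(t)) \geq \epsilon_n/K_c$ almost everywhere. Recalling from \cref{def:log_barrier_function} that $\psi'(x) = -1/x$ for $x < 0$, we have the explicit formula
\begin{equation}
\eta_{\epsilon_n, i}(t) = \epsilon_n \psi' \circ c_i(\bxen(t), \buen(t)) = -\frac{\epsilon_n}{c_i(\bxen(t), \buen(t))}.
\end{equation}
Since any penalized stationary point satisfies $c_i(\bxen(t), \buen(t)) < 0$ (this follows from the very definition of the penalized problem and the finiteness of the cost functional, since otherwise $\psi \circ c_i$ would not be integrable), the quantity $\eta_{\epsilon_n, i}(t)$ is well-defined and strictly positive almost everywhere.

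The essential upper bound then comes from directly substituting the inequality provided by Lemma \ref{lem:boundedness_ci}:
\begin{equation}
\eta_{\epsilon_n, i}(t) = \frac{\epsilon_n}{-c_i(\bxen(t), \buen(t))} \leq \frac{\epsilon_n}{\epsilon_n/K_c} = K_c, \quad \text{a.e. } t \in [0,T].
\end{equation}
This yields $\norm{\eta_{\epsilon_n, i}}_{\xLinfty} \leq K_c$ uniformly in $\epsilon_n$ and $i$, so $\eta_{\epsilon_n} = (\eta_{\epsilon_n, 1}, \dots, \eta_{\epsilon_n, n_c})^\top \in \xLinfty([0,T]; \R^{n_c}_+)$, which is the claim.

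There is no real obstacle here: all the work has been done in Lemma \ref{lem:boundedness_ci}. The proposition is essentially a reformulation of that lemma in terms of the penalty derivatives $\eta_{\epsilon_n, i}$, using the explicit identity $\epsilon \psi'(x) = -\epsilon/x$ to convert the lower bound on $-c_i$ into an upper bound on $\eta_{\epsilon_n, i}$. The only subtlety to verify is that the essential supremum (rather than just an $\xLone$ bound as in Lemma \ref{thm:interior_mixed_const}) is indeed controlled by $K_c$ uniformly in $\epsilon_n$, which follows from the pointwise (a.e.) nature of the inequality in Lemma \ref{lem:boundedness_ci}.
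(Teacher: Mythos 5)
Your proof is correct, and it is not circular: Lemma~\ref{lem:boundedness_ci} is established independently of Proposition~\ref{prop:mixed_pen_in_Linfty} (it rests on Corollary~\ref{prop:well_posed_p} and a perturbation of the control), so you may freely invoke it here. However, your route is genuinely different from the paper's. The paper proves the proposition by a duality argument: since Lemma~\ref{lem:boundedness_ci} guarantees $c(\bxe,\bue)<0$ a.e., the stationarity condition $H_u'(\bxe,\bue,\bpe)+\sum_i\eta_{\epsilon,i}a_i(\bxe)=0$ holds, and combining the $\xLinfty$-bound on $\bpe$ with the mixed-constraint qualification (Assumption~\ref{ass:qualification_condition_mixed_constraint}, used through the pseudo-inverse $C(t)^\top[C(t)C(t)^\top]^{-1}$) one shows that $w\mapsto\int_0^T\eta_{\epsilon_n}(t)w(t)\,\xdt$ is a bounded linear form on $\xLone$, whence $\eta_{\epsilon_n}\in\xLinfty$ by the duality $(\xLone)^*=\xLinfty$. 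You instead exploit the full quantitative content of Lemma~\ref{lem:boundedness_ci}: from $-c_i(\bxen,\buen)\geq\epsilon_n/K_c$ a.e.\ and $\epsilon_n\psi'(c_i)=-\epsilon_n/c_i$ you read off $\eta_{\epsilon_n,i}\leq K_c$ pointwise. This is shorter, avoids the qualification condition entirely, and immediately yields the uniform bound $\eta_{\epsilon_n,i}\in B_{\xLinfty}(0,K_c)$ that the paper only obtains afterwards by citing the lemma and the proposition together. What the paper's argument buys in exchange is robustness: it needs only the qualitative fact that the mixed constraint is inactive a.e.\ (so that the smooth stationarity condition holds), not the precise $\epsilon/K_c$ scaling of the interiority margin, and it exhibits the structural mechanism (stationarity plus constraint qualification) by which the multiplier estimate arises.
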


\begin{proof}
The proof of this result consists in proving that the mapping 
\begin{equation}
    \Psi_\epsilon: \xLone\ni w \mapsto \int_0^T \eta_{\epsilon_n}(t) w(t)\xdt \in \R
\end{equation}
is a continuous linear form on $\xLone$. From \cref{lem:boundedness_ci}, $c(\bxe, \bue)$ is strictly negative (not active) almost everywhere. Therefore the Hamiltonian minimization condition of the Pontryagin maximum principle writes $H_u'(\bxe,\bue, \bpe) +\sum_{i=1}^{n_c}\eta_{\epsilon,i} a_{i}(\bxe) = 0$ for almost all time. Then, for all $ v\in \U $, one has
    \begin{equation}
        \left\vert \int_0^T \sum_{i=1}^{n_c} \eta_{\epsilon_n,i}(t) a_{i}(\bxe(t)).v(t) \xdt \right\vert \leq
        \norm{ H_u'(\bxe,\bue, \bpe)}_{\xLinfty}\norm{v}_{\xLone} \leq \const(f,\ell,h)\norm{v}_{\xLone}
        \label{eq:Linfty_L1_pen_mixed}
    \end{equation}
    Let us denote $C(t):= a_{I^c_{\bue, \bx^0_{\epsilon}}(t,n)}(\bxe)$ and for all $w\in \xLone([0,T];\R^{n_c})$, let us define $v\in \U$ as follows
    \begin{equation}
        v(t) :=\begin{cases}
                C(t)^\top\left[C(t) C(t)^\top\right]^{-1}w_{I^c_{\bue, \bx^0_ {\epsilon}}(t,n)}(t) &\textrm{if } I^c_{\bue, \bx^0_ {\epsilon}}(t,n)\neq \emptyset\\
                0 &\textrm{otherwise}
        \end{cases}
        \label{eq:def_vt_from_wt}
    \end{equation}
    Since $C(t)$ is $\xLinfty$-bounded, there exists $M>0$ such that $\norm{v}_{\xLone}\leq M \norm{w}_{\xLone}$.
    In addition, let us define $\lambda_\epsilon\in \xLone([0,T];\R^{n_c})$ as follows
    \begin{equation}
        \lambda_{\epsilon,i}(t):= \begin{cases}
            \eta_{\epsilon_n,i} &\textrm{if } i\in I^c_{\bue, \bx^0_{\epsilon}}(t,n)\\
            0 &\textrm{otherwise}
        \end{cases}
        \label{eq:def_lambda_epsilon_t}
    \end{equation}
    Gathering \cref{eq:Linfty_L1_pen_mixed,eq:def_vt_from_wt,eq:def_lambda_epsilon_t} yields
     \begin{equation}
       \left\vert \int_0^T \sum_{i=1}^{n_c} \eta_{\epsilon_n,i}(t) a_{i}(\bxe(t)).v(t) \xdt \right\vert = \left\vert \int_0^T \lambda_\epsilon(t).w(t) \xdt \right\vert
       \label{eq:left_right_equal_lambda_w_mixed}
    \end{equation}
    Gathering \cref{eq:Linfty_L1_pen_mixed,eq:left_right_equal_lambda_w_mixed} we have $\left\vert \int_0^T \lambda_\epsilon(t).w(t) \xdt \right\vert\leq \const(f,\ell,h,M)\norm{w}_{\xLone}$ and using the density of $\xLinfty([0,T];\R^{n_c})$ in $\xLone([0,T];\R^{n_c})$ proves that $\Psi_\epsilon$ is continuous linear form over $\xLone([0,T];\R^{n_c})$ and concludes the proof.
\end{proof}
\textcolor{black}{From \cref{lem:boundedness_ci,prop:mixed_pen_in_Linfty}, $\eta_{\epsilon_n,i}\in B_{\xLinfty}(0,K_c)$, thus there exists a subsequence and a function $\bar{\nu}_i \in \xLinfty([0,T];\R_+)$ such that 
\begin{equation}
    \label{eq:conv_lambda_ci_nu_Lone}
    \lim_{k \rightarrow + \infty} \eta_{\epsilon_{n_k},i} \stackrel{*}{\rightharpoonup} \bar{\nu}_i
\end{equation}
which proves \cref{eq:conv_weak_nu}. Now, let us prove that $\bar{\nu}$ satisfies conditions \cref{eq:first_order_8,eq:first_order_12}. From \cref{thm:interior_mixed_const} and \cref{lem:boundedness_ci}, we have $\eta_{\epsilon_{n_k},i}>0$, $\forall t\in[0,T]$ and $\forall \epsilon_{n_k}>0$ which proves that $\bar{\nu}$ satisfies the non negativity condition \cref{eq:first_order_12}. Finally, let us prove that $\bar{\nu}$ satisfies the complementarity condition \cref{eq:first_order_8}. First, we have
\begin{equation}
\label{eq:conv_cin_etan_to_zero}
    \lim_{k\rightarrow \infty}  \left\langle \eta_{\epsilon_{n_k},i}, c_i(\bxenk, \buenk)\right\rangle = \lim_{k\rightarrow \infty}-\epsilon_{n_k} T=0
\end{equation}
Using \cref{lemme:weak_strong_l2_conv}, yields
\begin{equation}
    \lim_{s\rightarrow \infty}\lim_{r\rightarrow \infty}\left\langle \eta_{\epsilon_{n_s},i},c_i(\bar{x}_{\epsilon_r}, \bar{u}_{\epsilon_r})\right\rangle  = \lim_{r\rightarrow \infty}\lim_{s\rightarrow \infty}\left\langle\eta_{\epsilon_{n_s},i},c_i(\bar{x}_{\epsilon_r}, \bar{u}_{\epsilon_r})\right\rangle = \left\langle\bar{\nu}_i,c_i(\bx, \bu)\right\rangle
\end{equation}
which, in turn, gives
\begin{multline}
    \left\langle\bar{\nu}_i,c_i(\bx, \bu)\right\rangle=\liminf_s \liminf_r  \left\langle \eta_{\epsilon_{n_s},i},c_i(\bar{x}_{\epsilon_r}, \bar{u}_{\epsilon_r})\right\rangle  \\ \leq  \lim_{k\rightarrow \infty}  \left\langle \eta_{\epsilon_{n_k},i}, c_i(\bxenk, \buenk)\right\rangle
    =0 \\ \leq \limsup_s \limsup_r  \left\langle \eta_{\epsilon_{n_s},i},c_i(\bar{x}_{\epsilon_r}, \bar{u}_{\epsilon_r})\right\rangle = \left\langle\bar{\nu}_i,c_i(\bx, \bu)\right\rangle
\end{multline}
which proves that $\bar{\nu}$ satisfies the complementarity condition \cref{eq:first_order_8}.}

\subsection{Convergence of Pontryagin adjoint}
\label{sec:proof_convergence_pontryagin_adjoint}
Let $q_n\in \xBV([0,T])^n $ be the solution of 
\begin{subequations}
\begin{align}
    - \xd q_n(t)&= \begin{multlined}[t]\left[\ell_x'(\bxen(t),\buen(t))+f_x'(\bxen(t),\buen(t)).q_n(t)\right]\xdt\\
    +\sum_{i=1}^{n_c}c_{x,i}'(\bxen(t),\buen(t))\bar{\nu}_i(t)\xdt+\sum_{i=1}^{n_g}g_i'(\bxen(t))\xd \bar{\mu}_i(t)\end{multlined}\\
 q_n(0)&=-h'_{x(0)}(\bxen(0), \bxen(T))^\top.\bar{\lambda} \\
 q_n(T)&=\varphi'(\bxen(T))+h'_{x(T)}(\bxen(0), \bxen(T)) ^\top .\bar{\lambda}
\end{align}
\end{subequations}
Then, we have
\begin{multline}
    q_n(T) - \bp(T) + \bp(t) - q_n(t) = \int^T_t \ell'_x(\bx(s), \bu(s)) - \ell'_x(\bxen(s), \buen(s)) \xd s \\
    +\int^T_t f'_x(\bx(s), \bu(s)) .\bp(s) - f'_x(\bxen(s), \buen(s)).q_n(s) \xd s\\
    + \sum_{i=1}^{n_c} \int^T_t\left[ c_{i,x}'(\bx(s), \bu(s)) - c'_{i,x}(\bxen(s), \buen(s))\right] \bar{\nu}_i(s)\xd s \\
    + \sum_{i=1}^{n_g} \int^T_t\left[ g_{i}'(\bx(s)) - g'_{i}(\bxen(s))\right] \xd \bar{\mu}_i(s)
\end{multline}
Using \cref{lemme:weak_strong_l2_conv}, we have $\lim_n\norm{q_n(T) - \bp(T)} = 0$ and
\begin{align}
    \lim_{n} \norm{\bp(t) - q_n(t)} &=\lim_{n}\norm{ \int^T_t f'_x(\bx(s), \bu(s)) .\bp(s) - f'_x(\bxen(s), \buen(s)).q_n(s) \xd s}\\
    &=\begin{multlined}[t]
        \lim_{n} \norm{\int^T_t f'_x(\bxen(s), \buen(s)) .(\bp(s) - q_n(s)) \xd s}\\
        +\lim_n \norm{\int^T_t \left[f'_x(\bx(s), \bu(s)) - f'_x(\bxen(s), \buen(s))\right] .\bp(s) \xd s}
    \end{multlined}\\
    &\leq \const(f) \int^T_t \norm{\bp(s) - q_n(s)}\xd s
\end{align}
Therefore, $q_n$ pointwise converges to $\bp$ and since both are bounded we have
\begin{equation}
    \lim_{n\rightarrow +\infty} \norm{q_n-\bar{p}}_{\xLone} = 0
    \label{eq:conv_pq_BV}
\end{equation} 
In addition, using \cref{eq:def_lambda_g_mu}, we have
\begin{align}
    \norm{q_n(t)-\bpen(t)}\leq&\begin{multlined}[t] \Bigg{\|}\int^T_t f_x'(\bxen(s),\buen(s)).(q_n(s)-\bpen(s))\xd s\\ + \sum_{i=1}^{n_g}\int_t^Tg_i'(\bxen(s))\left(\xd \bar{\mu}_i(s) - \theta^{g_i}_{\epsilon_n}(s)\xd s\right)\\
    + \sum_{i=1}^{n_c}\int_t^Tc_{i,x}'(\bxen(s), \buen(s))\left(\bar{\nu}_i(s)- \eta_{\epsilon_{n},i}(s)\right)\xd s\Bigg{\|}
    \end{multlined}\\
    \leq&\begin{multlined}[t]\const(f)\int^T_t \norm{q_n(s)-\bpen(s)}\xd s\\ + \sum_{i=1}^{n_g}\norm{\int_t^Tg_i'(\bxen(s))\left(\xd \bar{\mu}_i(s)-\theta^{g_i}_{\epsilon_n}(s)\xd s\right)}\\
    +\sum_{i=1}^{n_c}\norm{\int_t^Tc_{i,x}'(\bxen(s), \buen(s))\left[\bar{\nu}_i(s)-\eta_{\epsilon_{n},i}(s)\right]\xd s}
    \end{multlined}
\end{align}
Now, let us define $h_n\in \xLone([0,T];\R_+)$ as follows
\begin{equation}
h_n(t):=\sum_{i=1}^{n_g}\norm{\int_t^Tg_i'(\bxen(s))\left(\xd\mu_i(s)-\theta^{g_i}_{\epsilon_n}(s)\xd s\right)}
    +\sum_{i=1}^{n_c}\norm{\int_t^Tc_{i,x}'(\bxen(s), \buen(s))\left[\bar{\nu}_i(s)-\eta_{\epsilon_{n},i}(s)\right]\xd s}
\end{equation}
thus $\norm{q_n(t)-\bpen(t)}\leq\const(f)\int^T_t \norm{q_n(s)-\bpen(s)}\xd s + h_n(t)$. From Grönwall inequality \cite[Lemma A.1, p.651]{Khalil}, we have $\norm{q_n(t)-\bpen(t)}\leq \const(f,T)\int^T_th_n(s)\xd s$. From the $\xLinfty$-convergence of $\bxenk$ and the weak $*$ convergence of $\theta_{\epsilon_n}^{g_i}$ we have 
\begin{equation}
\lim_{n\rightarrow \infty} \int_t^Tg_i'(\bxen(s))\left(\xd\mu_i(s)-\theta^{g_i}_{\epsilon_n}(s)\xd s\right) = 0
\end{equation}
\textcolor{black}{From \cref{lemme:weak_strong_l2_conv,eq:conv_lambda_ci_nu_Lone}, we have
\begin{equation}
    \lim_{n\rightarrow \infty}\lim_{m\rightarrow \infty}\left\langle \eta_{\epsilon_{n},i}-\bar{\nu}_i,c'_{i,x}(\bar{x}_{\epsilon_m}, \bar{u}_{\epsilon_m})\right\rangle  = \lim_{m\rightarrow \infty}\lim_{n\rightarrow \infty}\left\langle\eta_{\epsilon_{n},i}-\bar{\nu}_i,c'_{i,x}(\bar{x}_{\epsilon_m}, \bar{u}_{\epsilon_m})\right\rangle = 0
\end{equation}
which, in turn, gives
\begin{multline}
    0=\liminf_n \liminf_m  \left\langle \eta_{\epsilon_{n},i}-\bar{\nu}_i,c'_{i,x}(\bar{x}_{\epsilon_m}, \bar{u}_{\epsilon_m})\right\rangle  \leq  \lim_{n\rightarrow \infty}  \left\langle \eta_{\epsilon_{n},i}-\bar{\nu}_i, c'_{i,x}(\bxen, \buen)\right\rangle
     \\ \leq \limsup_n \limsup_m  \left\langle \eta_{\epsilon_{n},i}-\bar{\nu}_i,c'_{i,x}(\bar{x}_{\epsilon_m}, \bar{u}_{\epsilon_m})\right\rangle =0
\end{multline}
which yields $\lim_{n\rightarrow \infty}\int^T_t c_{i,x}'(\bxen(s), \buen(s))\left[\bar{\nu}_i(s)-\eta_{\epsilon_{n},i}(s)\right]\xd s = 0$. Thus, $h_n$ pointwise convergences to $0$}. In addition, from the boundedness of $(h_n)_n$ there exists a subsequence such that
\begin{equation}
  \lim_{k\rightarrow +\infty}\norm{q_{n_k}(t)-\bpenk(t)}\leq \const(f,T)\int^T_t\lim_{k\rightarrow +\infty}h_{n_k}(s)\xd s=0
\end{equation}
$q_{n_k}$ pointwise converges to $\bpenk$ and since both are bounded, from Lebesgue-Vitali's Theorem, we have
$$\lim_{k\rightarrow +\infty}\norm{q_{n_k}-\bpenk}_{\xLone} = 0$$ 
Gathering with \cref{eq:conv_pq_BV} yields $\norm{\bp -\bpenk}_{\xLone} \rightarrow 0$ which proves \cref{eq:conv_p_L1}.
\subsection{Convergence of control variable and cost function}
\label{sec:proof_convergence_control_cost}

 From, the convexity of $\ell(x,u)$ with respect to $u$ and from the strict convexity of the penalty function we have
\begin{equation}
    \forall n>0, \textrm{ for a.e. } t\in[0,T], {H^{\psi}_{uu}}''(\bxen(t), \buen(t), \bpen(t), \epsilon_n) > 0
\end{equation}
From the implicit function theorem \cite[Theorem 9.27, pp. 224-225]{Rudin}, for almost all time, there exists a mapping $\lambda_t$ such that
\begin{equation}
    \buen(t) := \lambda_t(\bxen(t), \bpen(t), \epsilon_n)
\end{equation}
By continuity of $\lambda_t$ and from the strong $\xLinfty$ (resp. $\xLone$) convergence of $\bxen$ (resp. $\bpen$), $\buen$ pointwise converges to some $z \in \xBV([0,T])^m$. Now, since the sequence $(\buen)_n$ weakly converges to $\bu$, from Mazur's lemma \cite[lemma 10.19, pp. 350]{Renardy.2004}, there exists a function $N:\mathds{N} \mapsto \mathds{N}$ and a sequence of sets of real positive numbers $\left(\left\{\alpha[n]_k: k=n,\dots,N(n)\right\}\right)_n$ satisfying $\sum_{k=n}^{N(n)}\alpha[n]_k = 1$ and such the sequence $(v_n)_n$ defined as follows
\begin{equation}
    v_n := \sum_{k=n}^{N(n)}\alpha[n]_k u_{\epsilon_k}
\end{equation}
converges in $\xLtwo$-norm to $\bu$. Therefore, there exists a subsequence denoted $(v_m)_m$ converging almost everywhere to $\bu$. Now, for almost all $t\in[0,T]$, we have
\begin{align}
    \norm{\bu(t) - z(t)} = \lim_{m\rightarrow +\infty} \norm{v_m(t) - z(t)} \leq  \lim_{m\rightarrow +\infty} \sum_{k=m}^{N(m)}\alpha[m]_k\norm{\bu_{\epsilon_m}(t) - z(t)} =0
\end{align}
which proves that there exists a subsequence$(\buenk)_k$ which converges almost everywhere to $\bu$ and since $(\buen)_n$ is $\xLinfty$-bounded the subsequence converges in $\xLone$-norm to $\bu$ which proves \cref{eq:conv_u_L2} and \cref{eq:conv_cost} from $\ell\in \xCone$.
\subsection{Convergence of stationary conditions on the Hamiltonian}
\label{sec:proof_convergence_stationnarity}
Let us prove that the limit point of the solution of \cref{eq:penalized_pontryagin_extremal} is also a solution of \cref{eq:first_order_3}. From \cref{eq:conv_u_L2,eq:conv_x_Linfty}
\begin{multline}
    \lim_{n\rightarrow +\infty }\norm{{H_u^{\psi}}'(\bxen, \buen, \bpen, \epsilon_n) - H'_u(\bx, \bu, \bp) - b(\bx)^\top.\bar{\nu}}_{\xLone} \leq \\
    \lim_{n\rightarrow +\infty }\bigg[\norm{\ell'_u(\bxen, \buen)-\ell'_u(\bx, \bu)}_{\xLone} +  \norm{f_2(\bxen)^\top . \bpen -f_2(\bx)^\top . \bp }_{\xLone} \\
    +\sum_{i=1}^{n_c}\norm{\epsilon_n \psi'\circ c_i(\bxen, \buen) b_i(\bxen) - \bar{\nu}_i b_i(\bx)}_{\xLone}\bigg]
\end{multline}
Each term of the sum converges in $\xLone$-norm to 0. Thus, taking a subsequence if necessary, \cref{eq:penalized_pontryagin_extremal_3} converges to $0$ almost everywhere and proves \cref{eq:first_order_3}.

\section{Solving Algorithms}
\label{sec:solving_alg}
\subsection{Primal solving algorithm}
In \cref{thm:first_order_convergence} we have proved that any sequence of solutions of \cref{eq:penalized_pontryagin_extremal} contains a converging subsequence. In the following, we denote $S_P(\epsilon):=(\bxe, \bpe, \bue,\bar{\lambda}_\epsilon)$ any solution of \cref{eq:penalized_pontryagin_extremal}. Now, the primal solving algorithm naturally writes as follows
\begin{algorithm}
\caption{Primal algorithm for optimal control problems}
\label{alg:primal}
\begin{algorithmic}[1]
\STATE{Define $\epsilon_0>0,\; \alpha \in(0,1),\;\textrm{tol}=o(1),\;k=0 $}
\WHILE{$\epsilon_k > \textrm{tol}$}
\STATE{$S_P(\epsilon_{k+1})\gets $solution of \cref{eq:penalized_pontryagin_extremal} initialized with $S_P(\epsilon_k)$} 
\STATE{$\epsilon_{k+1}\gets \alpha\epsilon_k$}
\STATE{$k\gets k+1$}
\ENDWHILE
\RETURN $S_P(\epsilon_k)$
\end{algorithmic}
\end{algorithm}

\subsection{Primal-dual solving algorithm}
\label{sec:primal_dual_conv}
 Before describing the primal-dual solving algorithm, we need the following convergence result, which is a direct consequence of \cref{thm:first_order_convergence}.
\begin{thrm}
\label{thm:primal_dual}
Let $(\epsilon_n)$ be a sequence of decreasing positive parameters with $\epsilon_n\rightarrow0$ and let
\begin{multline}
   (\bxen, \buen, \bpen, \bar{\theta}_{\epsilon_n}, \bar{\eta}_{\epsilon_n},\bar{\lambda}_{\epsilon_n})_n\in\\ \xWn{{1,\infty}}([0,T];\R^n) \times \textnormal{U} \times \xWn{{1,1}}([0,T];\R^n) \times \xLone([0,T];\R^{n_g}_+) \times \xLinfty([0,T];\R^{n_c}_+) \times \R^{n_h}
\end{multline}

be a solution of the following Primal Dual TPBVP
\begin{subequations}
\label{eq:all_primal_dual}
\begin{align}
    \dot{x}_{\epsilon_n}(t) =& f(\bxen(t),\buen(t))\label{eq:primal_dual_1}\\
    \dot{\bar{p}}_{\epsilon_n}(t) =&\begin{multlined}[t]
    -H'_x(\bxen(t), \buen(t), \bpen(t))- \sum_{i=1}^{n_g}\bar{\theta}_{\epsilon_n,i}(t)g_i'(\bxen(t))\\
    - \sum_{i=1}^{n_c}\bar{\eta}_{\epsilon_n,i}(t)c_{i,x}'(\bxen(t), \buen(t))\end{multlined}\label{eq:primal_dual_2}\\
    0=&H'_u(\bxen(t), \buen(t), \bpen(t))+\sum_{i=1}^{n_c}\bar{\eta}_{\epsilon_n,i}(t)c'_{i,u}(\bxen(t),\buen(t))\label{eq:primal_dual_3}\\
    0=&\bar{\theta}_{\epsilon_n,i}(t) - g_i(\bxen(t))-\sqrt{\bar{\theta}_{\epsilon_n,i}(t)^2+g_i(\bxen(t))^2+2\epsilon_n}\label{eq:primal_dual_4}\\
    0=&\bar{\eta}_{\epsilon_n,i}(t) - c_i(\bxen(t), \buen(t))-\sqrt{\bar{\eta}_{\epsilon_n,i}(t)^2+c_i(\bxen(t), \buen(t))^2+2\epsilon_n}\label{eq:primal_dual_5}\\
    0 =&h(\bxen(0),\bxen(T))\label{eq:primal_dual_6}\\
    0=& \bpen(0)  +h'_{x(0)}(\bxen(0), \bxen(T))^\top.\bar{\lambda}_{\epsilon_n}\label{eq:primal_dual_7}\\
    0 =&\bpen(T) -\varphi'(\bxen(T))- h'_{x(T)}(\bxen(0), \bxen(T))^\top.\bar{\lambda}_{\epsilon_n}
\end{align}
\end{subequations}
Then $(\bxen, \buen, \bpen, \bar{\theta}_{\epsilon_n}, \bar{\eta}_{\epsilon_n})_n$ contains a subsequence converging to a stationary point of the original problem $(x[\bu, \bx^0],\bu,\bp,\bar{\mu},\bar{\nu}, \bar{\lambda})$ as follows
\begin{subequations}
\begin{multline}
     \norm{\buenk - \bu}_{\xLone} \rightarrow 0,\;\;\norm{\bxenk -x[\bu, \bx^0]}_{\xLinfty} \rightarrow 0,\;\;\vert J(\bxenk,\buenk) - J(x[\bu, \bx^0], \bu) \vert \rightarrow 0\\
     \norm{\bar{\lambda}_{\epsilon_{n_k}}-\bar{\lambda}} \rightarrow 0,\;\;\norm{\bpenk-\bp}_{\xLone} \rightarrow 0,\;\;\textcolor{black}{\bar{\eta}_{\epsilon_n} \stackrel{\ast}{\rightharpoonup}\bar{\nu}},\;\;\bar{\theta}_{\epsilon_n} \xd t \stackrel{\ast}{\rightharpoonup}
    \xd \bar{\mu} 
\end{multline}
\end{subequations}
\end{thrm}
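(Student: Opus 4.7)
The plan is to observe that the smoothed Fischer--Burmeister relations \cref{eq:primal_dual_4}--\cref{eq:primal_dual_5} can be solved in closed form, which reduces the primal--dual system to the penalized stationarity system of \Cref{def:penalized_pontryagin_extremal}. Once this reduction is established, \Cref{thm:first_order_convergence} applies directly and delivers all stated convergences.

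First, I would simplify \cref{eq:primal_dual_4}. Rewriting it as $\bar{\theta}_{\epsilon_n,i}(t) + (-g_i(\bxen(t))) = \sqrt{\bar{\theta}_{\epsilon_n,i}(t)^2 + g_i(\bxen(t))^2 + 2\epsilon_n}$, the right-hand side is non-negative, so $\bar{\theta}_{\epsilon_n,i}(t) \geq g_i(\bxen(t))$. Squaring and cancelling the squared terms yields $\bar{\theta}_{\epsilon_n,i}(t)\, g_i(\bxen(t)) = -\epsilon_n$. Since $\epsilon_n > 0$, the two factors have opposite signs, and combined with the sign constraint just obtained, this forces $\bar{\theta}_{\epsilon_n,i}(t) > 0$ and $g_i(\bxen(t)) < 0$ for all $t\in[0,T]$. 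Consequently,
\begin{equation*}
\bar{\theta}_{\epsilon_n,i}(t) \;=\; -\frac{\epsilon_n}{g_i(\bxen(t))} \;=\; \epsilon_n \, \psi' \circ g_i(\bxen(t)).
\end{equation*}
The same argument applied to \cref{eq:primal_dual_5} delivers the strict interiority $c_i(\bxen(t), \buen(t)) < 0$ almost everywhere and the identification $\bar{\eta}_{\epsilon_n,i}(t) = \epsilon_n\, \psi' \circ c_i(\bxen(t), \buen(t))$.

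Substituting these explicit expressions into \cref{eq:primal_dual_2}--\cref{eq:primal_dual_3} recovers exactly ${H^\psi_x}'$ and ${H^\psi_u}'$ from \Cref{def:penalized_pre_hamiltonian}, and the boundary and transversality conditions \cref{eq:primal_dual_6}--\cref{eq:primal_dual_7} coincide with their counterparts in \Cref{def:penalized_pontryagin_extremal}. Therefore any solution of the primal--dual TPBVP induces a tuple $(\bxen, \buen, \bpen, \bar{\lambda}_{\epsilon_n})$ that is a penalized stationary point in the sense of \Cref{def:penalized_pontryagin_extremal}. Invoking \Cref{thm:first_order_convergence} then yields, up to extraction, all the convergences \cref{eq:conv_u_L2}--\cref{eq:weak_star_conv_pen_state}. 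The explicit identifications $\bar{\theta}_{\epsilon_{n_k},i} = \epsilon_{n_k}\, \psi'\circ g_i(\bxenk)$ and $\bar{\eta}_{\epsilon_{n_k},i} = \epsilon_{n_k}\, \psi'\circ c_i(\bxenk,\buenk)$ translate \cref{eq:weak_star_conv_pen_state} and \cref{eq:conv_weak_nu} into the announced weak-$\ast$ limits $\bar{\theta}_{\epsilon_{n_k}}\xdt \stackrel{\ast}{\rightharpoonup} \xd\bar{\mu}$ and $\bar{\eta}_{\epsilon_{n_k}} \stackrel{\ast}{\rightharpoonup} \bar{\nu}$.

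The only real subtlety is the sign analysis of the smoothed Fischer--Burmeister equations, which simultaneously produces the positivity of the dual variables, the strict interiority of the constraints, and the explicit closed-form expression of $\bar{\theta}_{\epsilon_n}$ and $\bar{\eta}_{\epsilon_n}$ in terms of $\psi'$. Once this algebraic reduction is in place, no further analytic work is required: the result is a corollary of \Cref{thm:first_order_convergence}.
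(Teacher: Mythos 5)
Your proof is correct and follows essentially the same route as the paper: solve the smoothed Fischer--Burmeister relations \cref{eq:primal_dual_4}--\cref{eq:primal_dual_5} in closed form, identify $\bar{\theta}_{\epsilon_n,i}=\epsilon_n\psi'\circ g_i(\bxen)$ and $\bar{\eta}_{\epsilon_n,i}=\epsilon_n\psi'\circ c_i(\bxen,\buen)$ so that the primal--dual system reduces to the penalized stationarity system of \cref{def:penalized_pontryagin_extremal}, and conclude by \cref{thm:first_order_convergence}. The only difference is that you obtain the strict interiority $g_i(\bxen)<0$ and $c_i(\bxen,\buen)<0$ directly from the sign analysis of the Fischer--Burmeister equations (the product $\bar{\theta}_{\epsilon_n,i}\,g_i(\bxen)=-\epsilon_n$ combined with $\bar{\theta}_{\epsilon_n,i}\geq g_i(\bxen)$), whereas the paper invokes \cref{thm:interior_state} and \cref{lem:boundedness_ci}; your self-contained algebraic derivation is a slight improvement, since it applies to any solution of the TPBVP rather than only to locally optimal ones.
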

\begin{proof}
From \cref{thm:interior_state} and \cref{lem:boundedness_ci}, we have $g_i(\bxen(t))<0$ and  $c_i(\bxen(t),\buen(t))<0$ for all $\epsilon_n>0$. Therefore \cref{eq:primal_dual_4} is equivalent to $\bar{\theta}_{\epsilon_n,i}(t) =-\epsilon_n/g_i(\bxen(t))$ and \cref{eq:primal_dual_4} is equivalent to $\bar{\eta}_{\epsilon_n,i}(t) =-\epsilon_n/c_i(\bxen(t), \buen(t))$. Combining with \cref{eq:primal_dual_2,eq:primal_dual_3} proves that any solution of \cref{eq:all_primal_dual} is also solution of \cref{eq:penalized_pontryagin_extremal} and using \cref{thm:first_order_convergence} concludes the proof.
\end{proof}
Let us denote $S_{PD}(\epsilon):=(\bxe, \bue, \bpe, \bar{\theta}_{\epsilon}, \bar{\eta}_{\epsilon},\bar{\lambda}_{\epsilon})$ any solution of \cref{eq:all_primal_dual}, then the primal-dual algorithm writes as follows.
\begin{algorithm}
\caption{Primal-dual algorithm for optimal control problems}
\label{alg:primal_dual}
\begin{algorithmic}[1]
\STATE{Define $\epsilon_0>0,\; \alpha \in(0,1),\;\textrm{tol}=o(1),\;k=0 $}
\WHILE{$\epsilon_k > \textrm{tol}$}
\STATE{$S_{PD}(\epsilon_{k+1})\gets $solution of \cref{eq:all_primal_dual} initialized with $S_{PD}(\epsilon_k)$} 
\STATE{$\epsilon_{k+1}\gets \alpha\epsilon_k$}
\STATE{$k\gets k+1$}
\ENDWHILE
\RETURN $S_{PD}(\epsilon_k)$
\end{algorithmic}
\end{algorithm}

Even though \cref{alg:primal,alg:primal_dual} are equivalent, the primal-dual algorithm can explore non-admissible trajectories without becoming singular. For example, the primal-dual algorithm can be initialized with non-admissible trajectories and still be numerically tractable which is of course, not the case with the primal algorithm.

\section{Numerical example: Robbin's problem}
\label{sec:algoExample}
The numerical example and the Differential Algebraic Equations (DAEs) solver used in this example are freely available at \url{https://ifpen-gitlab.appcollaboratif.fr/detocs/ipm_ocp}. The solver is a two point boundary differential algebraic equations solver  adapted from \cite{Kierzenka2001ABS} to solve index-1 differential algebraic equations.
\begin{subequations}
\begin{align}
    \min_{u} & \int_0^6 x(t)\xdt\\
    x'''(t) & = u(t)\\
    x(0) &= 1\\
    x'(0), x''(0) &= 0\\
    0&\geq -x(t)\\
    u & \in [-1,1]
\end{align}
\end{subequations}
This problem is challenging since the optimal solution exhibits a Fuller-like phenomenon both on the adjoint state $\bp$ and on the control $\bu$. For this problem, one can check that conditions \cref{eq:penalized_pontryagin_extremal_5,eq:penalized_pontryagin_extremal_6} are equivalent to dropping the end initial-final constraint multiplier and add the constraint $\bar{p}(T) = 0$.

\subsection{Resolution using the primal algorithm}
The parameterization of the primal algorithm for the Robbins problem is as follows
\begin{equation}
\epsilon_0=0.1,\;\;\alpha=0.8,\;\;\textrm{tol}=10^{-8},\;\;S_P(\epsilon_0)(t):=(1,0,0,0,0,0,0)
\end{equation}
Using this setting, the execution time is 0.97s.

\subsection{Resolution using the primal-dual algorithm}
The parameterization of the primal-dual algorithm for the Robbins problem is as follows
\begin{equation}
    \epsilon_0=0.1,\;\;\alpha=0.5,\;\;\textrm{tol}=10^{-9},\;\;S_{PD}(\epsilon_0)(t):=(1,0, 0, 0, 0, 0, 0, 0, 0, 0)
\end{equation}
One can see that the decay rate of the primal-dual method is smaller than the one used in the primal case. Both parameters have been set to the lower value achieving convergence. In addition, the tolerance can also be set lower using the primal-dual version of the algorithm. The execution time with the primal-dual method is 0.20s thanks to the smaller decay rate.

\appendix

\section{Proofs of \cref{sec:preliminary_results}}
\label{sec:proof_annex}

\subsection{Proof of \cref{prop:major_Linf_L1}}
\label{sec:proof_lipschitz}
From \cref{ass:bounded_control_set}, $x[u, x^0]$ is valued in a compact subset of $\R^n$, In addition, $f$ being $\xCtwo$ there exists $\const(f) < +\infty$ such that for all $(u_1, x^0_1),(u_2,x^0_2)\in \xLinfty \times \R^n$
\begin{equation}
\parallel \dot{x}[u_1, x^0_1](t) - \dot{x}[u_2, x^0_2](t) \parallel 
\leq \const(f)\left(\parallel x[u_1, x^0_1](t) - x[u_2, x^0_2](t) \parallel  + \parallel u_1(t) - u_2(t)\parallel \right)    
\end{equation}
Using Grönwall inequality \cite[Lemma A.1, p.651]{Khalil} again, there exists $\const(f) < +\infty$ such that $\parallel x[u_1, x^0_1]-x[u_2,x^2_0]\parallel_{\xLinfty} \leq \const(f)(\parallel u_1 - u_2\parallel_{\xLone} +\norm{x^0_1 - x^0_2})$.

\subsection{Proof of \cref{lemme:weak_strong_l2_conv}}
\label{app:weak_strong_convergence}
To alleviate the notation, we denote $x_n:=x[u_n,x^0_n]$ and $\bx:=x[\bu, \bx^0]$. using these notations, we have
\begin{equation}
    x_n(t_2) - x_n(t_1) := \int_{t_1}^{t_2}f_1(x_n(t)) + f_2(x_n(t)).u_n(t)\xdt
\end{equation}
First, $\forall t_1,t_2\in[0,T]$, From Hölder inequality, we have
\begin{equation}
    \norm{x_n(t_2) -  - x_n(t_1)}\leq\sup_n\norm{f_1(x_n) + f_2(x_n).u_n}_{\xLtwo}\sqrt{\vert t_1 - t_2\vert} 
\end{equation}
Therefore, the sequence $(x_n)_n$ is bounded and equicontinuous. From Arzela-Ascoli \cite[Theorem 1.3.8, p.33]{kurdila}, it contains a uniformly converging subsequence to some $\hat{x}$. Let $(x_k)_k$ be the uniformly converging sequence, one has
\begin{equation}
    \hat{x}(t) - \bx(t) = \lim_{k\rightarrow +\infty } x_k(t) -\bx(t)
  =\int_0^t f_1(\hat{x}(s))- f_1(\bx(s)) + (f_2(\hat{x}(s)) -f_2(\bx(s)) ).\bu(s)\xd s \label{eq:diff_xhat_xbar}
\end{equation}
Since $\hat{x}(0)=\bx^0$, the term inside the integral in \cref{eq:diff_xhat_xbar} is always zero, thus $\hat{x} = \bx$. Now, let us prove that the whole sequence $(x_n)_n$ uniformly converges to $\bx$ by contradiction. Assume that, there exists a subsequence $(x_k)_k$ such that $\exists K>0$ and $\epsilon >0$ satisfying $\norm{x_k- \bx}_{\xLinfty}\geq \epsilon$ for all $k\geq K$. One can extract a sub-subsequence $(x_{k_j})_j$ uniformly converging to some $x_1$ with $\norm{x_1 - \bx}_{\xLinfty}>0$. However, $\Vad$ being weakly compact in the topology $\sigma(\xLtwo\times \R^n, \xLtwo\times \R^n)$, one can extract from $(u_{k_j}, x^0_{k_j})_j$ a weakly convergent subsequence. By definition, this sequence weakly converges to $(\bu, \bx^0)$ and proves that $(x_{k_j})_j$ contains a subsequence converging to $\bx$ which contradicts the initial assumption and proves the uniform convergence of $(x_n)_n$. As a consequence, $\alpha(x_n)$ and $\beta(x_n)$ uniformly converges to $\alpha(\bx)$ and $\beta(\bx)$ respectively. In addition, the sequence $(\alpha(x_n).u_n + \beta(x_n))_n$ is uniformly $\xLinfty$-bounded, thus contains a weakly $\ast$ converging subsequence to some $\theta $. Assume that $\theta$ is not equal to $\alpha(\bx).\bu +\beta(\bx)$ and Let $\varphi \in \xLtwo([0,T];\R^{n_c})\cap \xLone([0,T];\R^{n_c})$ we have
\begin{equation}
    \langle\varphi, \theta - \alpha(\bx).\bu -\beta(\bx) \rangle = \lim_{k\rightarrow +\infty}\left\langle \varphi, \alpha(x_{n_k}).u_{n_k} + \beta(x_{n_k}) - \alpha(\bx).\bu - \beta(\bx)\right\rangle
    = \lim_{k\rightarrow +\infty}\left\langle \varphi, \alpha(\bx) (u_{n_k} -\bu) \right\rangle = 0
\end{equation}
Since $\xLtwo([0,T];\R^{n_c})\cap \xLone([0,T];\R^{n_c})$ is dense in $\xLone([0,T];\R^{n_c})$ this contradicts the initial assumption and proves that the weak $*$ limit is $\alpha(\bx).\bu +\beta(\bx)$. To prove that the whole sequence weakly $*$ converges, we use the same argument as the one we used to prove uniform convergence of the state, which concludes the proof.

\subsection{Proof of \cref{prop:lower_bound_measure}}
\label{sec:proof_measure}
Using \cref{ass:bounded_control_set}, $\forall u\in \U$ we have
\begin{equation}
    \vert g_i(x[u](t)) - g_i(x[u](s))\vert \leq \const(g)\norm{x[u](t) - x[u](s)}
    \leq \const(f,g)\vert t-s\vert
\end{equation}
To prove the proposition, we only need to prove the lower bound holds on any interval $(\alpha_1,\alpha_2)\subseteq E$. From the continuity of $g_i$, $\exists t_1,t_2$ such that $g_i(x[u](t_1))=\alpha_1$, $g_i(x[u](t_2))=\alpha_2$ and such that $(t_1,t_2)\subseteq g_i(x[u])^{-1}((\alpha_1,\alpha_2))$ and
\begin{align}
  m[u,g_i]((\alpha_1,\alpha_2)) \geq \vert t_1 - t_2\vert&\geq\const(f,g) \vert g_i(x[u](t_1)) - g_i(x[u](t_2))\vert\nonumber\\
  &\geq \const(f,g)\vert \alpha_1 - \alpha_2\vert  
\end{align}

\subsection{Proof of \cref{prop:maximal_distance}}
\label{sec:proof_maximal_distance}
Let $\delta>0$, and for all $(u, x^0)\in \Vad_\infty$ let us denote
\begin{equation}
    \gamma_\delta(u, x^0) := \inf_{v\in B_{\norm{.}_{\Vad}}((u,x^0),\delta)\cap \Vad}\left\lbrace \sup_t g(x[v, y^0](t))\right\rbrace
\end{equation}
From \cref{ass:interior_accessibility}, we have $\gamma_\delta(u, x^0)<0$. Then, $\forall (u, x^0)\in \Vad_\infty$, $\exists (v,y^0)\in B_{\norm{.}_{\Vad}}((u,x^0),\delta)\cap \Vad$ such that
\begin{equation}
\sup_t g(x[v, y^0](t))\leq \gamma_\delta(u, x^0)\leq \sup_{(u, x^0)\in \Vad_\infty}\gamma_\delta (u, x^0):=-2G_\delta <0
\end{equation}
In addition,  if $\supp^g_{u,x^0}(G_\delta)\neq \emptyset$ , then $\forall t\in \supp^g_{u,x^0}(G_\delta)$ we have
\begin{align}
    g(x[v, y^0](t)) - g(x[u, x^0](t))\leq -2G_\delta + G_\delta =- G_\delta
\end{align}
Now, let us denote  
\begin{equation}
    \kappa_\delta(u,x^0) := \inf_{v\in B_{\norm{.}_{\Vad}}((u,x^0),\delta)\cap \Vad}\left\lbrace \esssup_t c(x[v, y^0](t), v(t))\right\rbrace
\end{equation}
From \cref{ass:interior_accessibility}, for all $(u, x^0)\in \Vad_\infty$, $\exists (v_n, y^0_n)_n\in \VadStrict(n)$ converging to $(u, x^0)$. Thus, for all $\delta >0$, $\exists N_\delta[u, x^0]>0$, such that $\forall n\geq N_\delta[u, x^0]$, we have $(v_n,y_n^0)\in B_{\norm{.}_{\Vad}}((u, x^0), \delta)$, which yields $\kappa_\delta(u,x^0) \leq -1/N_\delta(u, x^0)$. Now, $\forall (u, x^0)\in \Vad_\infty$, $\exists (v,y^0)\in B_{\norm{.}_{\Vad}}((u,x^0),\delta)\cap \Vad$ such that
\begin{equation}
    \esssup_t c(x[v, y^0](t), v(t))\leq \kappa_\delta(u, x^0)\leq\sup_{(u, x^0)\in \Vad_\infty} -\frac{1}{N_\delta[u, x^0]}:=-2C_\delta <0
\end{equation}
In addition,  if $\supp^c_{u,x^0}(C_\delta)\neq \emptyset$ , then $\forall t\in \supp^c_{u, x^0}(C_\delta)$ we have
\begin{align}
    c(x[v, y^0](t), v(t)) - c(x[u, x^0](t), u(t))\leq -2C_\delta + C_\delta =- C_\delta
\end{align}

\bibliographystyle{plain}
\bibliography{references}

\begin{thebibliography}{10}

\bibitem{Betts}
J.T. Betts.
\newblock {\em Practical Methods for Optimal Control Using
  NonlinearProgramming}.
\newblock SIAM, Philadelphia, PA, 2005.

\bibitem{trelat_continuation}
R.~Bonalli, B.~Hérissé, and E.~Trélat.
\newblock Analytical initialization of a continuation-based indirect method for
  optimal control of endo-atmospheric launch vehicle systems.
\newblock {\em IFAC-PapersOnLine}, 50(1):482--487, 2017.

\bibitem{FredericBonnans.2014}
J.F. {Bonnans}, Xavier Dupuis, and Laurent Pfeiffer.
\newblock Second-order sufficient conditions for strong solutions to optimal
  control problems.
\newblock {\em ESAIM: Control, Optimisation and Calculus of Variations},
  20(3):704--724, 2014.

\bibitem{Bonnans_log}
J.F. Bonnans and T.~Guilbaud.
\newblock Using logarithmic penalties in the shooting algorithm for optimal
  control problems.
\newblock {\em Optimal Control Applications and Methods}, 24:257--278, 2003.

\bibitem{hermantshooting}
J.F. Bonnans. and A.~Hermant.
\newblock Well-posedness of the shooting algorithm for state constrained
  optimal control problems with a single constraint and control.
\newblock {\em SIAM Journal on Control and Optimization}, 46(4):1398--1430,
  2007.

\bibitem{BonnansHermant}
J.F. Bonnans and A.~Hermant.
\newblock Second-order analysis for optimal control problems with pure state
  constraints and mixed control-state constraints.
\newblock {\em Ann. Inst. H. Poincaré Anal. Non Linéaire}, 26(2):561–598,
  2009.

\bibitem{BrysonHo}
A.E. Bryson and Y.C. Ho.
\newblock {\em Applied Optimal Control}.
\newblock Taylor $\&$ Francis, 1975.

\bibitem{brezis}
H.~Brézis.
\newblock {\em Functional Analysis, Sobolev Spaces and Partial Differential
  Equations}.
\newblock Springer, 2010.

\bibitem{graichen}
K.~Graichen and N.~Petit.
\newblock Incorporating a class of constraints into the dynamics of optimal
  control problems.
\newblock {\em Optimal Control Applications and Methods}, 30:537--561, 2009.

\bibitem{haberkorn}
T.~Haberkorn and E.~Trélat.
\newblock Convergence results for smooth regularizations of hybrid nonlinear
  optimal control problems.
\newblock {\em SIAM Journal on Control and Optimization}, 49(4):1498--1522,
  2011.

\bibitem{Hartl}
R.~F. Hartl, Suresh Sethi, and Raymond Vickson.
\newblock A survey of the maximum principles for optimal control problems with
  state constraints.
\newblock {\em SIAM Review}, 37(2):181--218, 1995.

\bibitem{Khalil}
H.~Khalil.
\newblock {\em Non {L}inear {S}ystems}.
\newblock Prentice {H}all, 2002.

\bibitem{Kierzenka2001ABS}
Jacek Kierzenka and Lawrence~F. Shampine.
\newblock A bvp solver based on residual control and the maltab pse.
\newblock {\em ACM Trans. Math. Softw.}, 27:299--316, 2001.

\bibitem{kurdila}
A.J. Kurdila and M.~Zabarankin.
\newblock {\em Convex Functional Analysis}.
\newblock {Birkh{\"a}user Boston}, 2005.

\bibitem{lasdon67}
L.~Lasdon, A.~Waren, and R.~Rice.
\newblock An interior penalty method for inequality constrained optimal control
  problems.
\newblock {\em IEEE Transactions on Automatic Control}, 12:388--395, 1967.

\bibitem{maliOCAM}
P.~Malisani, F.~Chaplais, and N.~Petit.
\newblock An interior penalty method for optimal control problems with state
  and input constraints of nonlinear systems.
\newblock {\em Optimal Control Applications and Methods}, 37:3--33, 2014.

\bibitem{Maurer}
A.~Maurer and J.~Zowe.
\newblock First and second-order necessary and sufficient optimality conditions
  for infinite-dimensional programming problems.
\newblock {\em Math. Programming}, 16:98--110, 1979.

\bibitem{hev}
K.~Namwook, C.~Sukwon, and P.~Huei.
\newblock Optimal control of hybrid electric vehicles based on pontryagin's
  minimum principle.
\newblock {\em IEEE Transactions on Control Systems Technology},
  19(5):1279--1287, 2011.

\bibitem{Nocedal}
J.~Nocedal and S.J. Wright.
\newblock {\em Numerical Optimization}.
\newblock Springer, 2nd edition, 2000.

\bibitem{Renardy.2004}
M.~Renardy and R.~Rogers.
\newblock {\em An introduction to partial differential equations}, volume~13 of
  {\em Texts in applied mathematics}.
\newblock Springer, New York, second edition edition, 2004.

\bibitem{robbins}
H.~Robbins.
\newblock Junction phenomena for optimal control with state-variable inequality
  constraints of third order.
\newblock {\em Journal of Optimization Theory and Applications}, 31:85--99,
  1980.

\bibitem{Rudin}
W.~Rudin.
\newblock {\em Principles of Mathematical Analysis}.
\newblock McGraw-Hill, 3rd edition, 1976.

\bibitem{seywald1993goddard}
Hans Seywald and Eugene~M Cliff.
\newblock Goddard problem in presence of a dynamic pressure limit.
\newblock {\em Journal of Guidance, Control, and Dynamics}, 16(4):776--781,
  1993.

\bibitem{bvpsolve}
K.~Soetaert, J.~Cash, F.~Mazzia, Ascher U.M., G.~Bader, J.~Christiansen, and
  R.R. Russel.
\newblock Solvers for boundary value problems of differential equations.
\newblock \url{https://cran.r-project.org/web/packages/bvpSolve/index.html}.

\bibitem{hiv}
R.F. Stengel, R.~Ghigliazza, Kulkarni N., and O.~Laplace.
\newblock Optimal control of innate immune response.
\newblock {\em Optimal Control Applications and Methods}, 23:91–104, 2002.

\bibitem{Weiser}
M.~Weiser.
\newblock Interior point methods in function space.
\newblock {\em SIAM Journal on Control and Optimization}, 44(5):1766--1786,
  2005.

\bibitem{WrightPrimalDual}
S.J. Wright.
\newblock {\em Primal-Dual Interior-Point Methods}.
\newblock SIAM publications, 1997.

\bibitem{ipopt}
A.~Wächter and L.~T. Biegler.
\newblock On the implementation of an interior-point filter line-search
  algorithm for large-scale nonlinear programming.
\newblock {\em Mathematical Programming}, 106:25–57, 2006.

\end{thebibliography}

\end{document}